\documentclass{article}%
\usepackage{amsmath}%
\setcounter{MaxMatrixCols}{30}%
\usepackage{amsfonts}%
\usepackage{amssymb}%
\usepackage{graphicx}
\newtheorem{theorem}{Theorem}

\newtheorem{definition}[theorem]{Definition}

\newtheorem{lemma}[theorem]{Lemma}

\newtheorem{proposition}[theorem]{Proposition}

\newenvironment{proof}[1][Proof]{\noindent\textbf{#1.} }{\ \rule{0.5em}{0.5em}}
\begin{document}

\title{A bracket polynomial for graphs, IV. \\Undirected Euler circuits, graph-links and multiply marked graphs}
\author{Lorenzo Traldi\\Lafayette College\\Easton, Pennsylvania 18042}
\date{}
\maketitle

\begin{abstract}
In earlier work we introduced the graph bracket polynomial of graphs with
marked vertices, motivated by the fact that the Kauffman bracket of a link
diagram $D$ is determined by a looped, marked version of the interlacement
graph associated to a directed Euler system of the universe graph of $D$. Here
we extend the graph bracket to graphs whose vertices may carry different kinds
of marks, and we show how multiply marked graphs encode interlacement with
respect to arbitrary (undirected) Euler systems. The extended machinery brings
together the earlier version and the graph-links of D. P. Ilyutko and V. O.
Manturov [\textit{J. Knot Theory Ramifications} \textbf{18} (2009), 791-823].
The greater flexibility of the extended bracket also allows for a recursive
description much simpler than that of the earlier version.

\bigskip

\textit{Keywords. }circuit partition, graph, graph-link, interlacement, Jones
polynomial, Kauffman bracket, local complement, Reidemeister move, virtual link

\bigskip

\textit{2000 Mathematics Subject Classification.} 57M25, 05C50

\end{abstract}

\section{Introduction}

An \textit{oriented link diagram} is a finite collection of oriented,
piecewise smooth closed curves in the plane, whose only singularities are
finitely many \textit{crossings} (double points). Classical crossings may be
positive or negative, as indicated in Fig. \ref{linksbis4}, and there may also
be virtual crossings. On the rare occasion when we want to restrict attention
to diagrams without virtual crossings, we refer to \textit{classical diagrams
}or \textit{classical links}.%
\begin{figure}
[ptb]
\begin{center}
\includegraphics[
trim=1.337501in 7.631169in 1.207214in 0.940257in,
height=1.6215in,
width=4.3042in
]%
{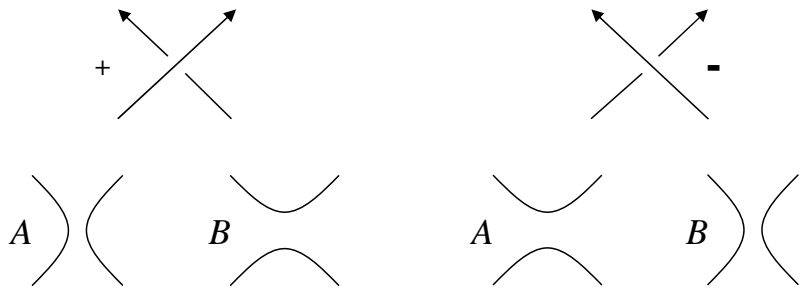}%
\caption{The smoothings of classical crossings.}%
\label{linksbis4}%
\end{center}
\end{figure}

The Kauffman bracket of an oriented link diagram $D$ is defined by a formula
that incorporates the numbers of closed curves in the various \textit{Kauffman
states} of $D$ \cite{Kau, Kv}. If $D$ has $n$ classical crossings then it has
$2^{n}$ states, obtained by choosing one of the two \textit{smoothings} at
each classical crossing; see Fig. \ref{linksbis4}. The bracket is then
\[
\lbrack D]=\sum_{S}A^{a(S)}B^{b(s)}d^{c(S)-1},
\]
in which the contribution of each state $S$ is determined by the number $a(S)
$ of $A$ smoothings in $S$, the number $b(S)$ of $B$ smoothings in $S$, and
the number $c(S)$ of closed curves in $S$. We use $[D]$ rather than the more
familiar notation $\left\langle D\right\rangle $ in order to distinguish this
three-variable bracket polynomial from its reduced one-variable form.

Associated to an oriented link diagram $D$ there is a 4-regular graph $U$, the
\textit{universe graph }of $D$, whose vertices correspond to the classical
crossings of $D$ and whose edges correspond to the arcs of $D$. Each vertex of
$U$ carries the sign of the corresponding crossing of $D$. The 2-in, 2-out
directed graph obtained from $U$ by directing its edges in accordance with the
orientations of the link components is denoted $\vec{U}$. It should be
emphasized that although $D$ is given as a specific subset of the plane, we
regard $U$ and $\vec{U}$ as abstract (nonimbedded) graphs, with signed
vertices. For the universe (di)graphs of two diagrams to be
\textit{isomorphic} (informally, \textit{the same}) there must be a one-to-one
correspondence between their vertices that preserves not only edges but also
vertex signs; the correspondence need not be compatible with the way the
diagrams are drawn in the plane.

If $D$ is a diagram of a knot $K$ then it has a \textit{looped interlacement
graph} $\mathcal{L}(D)$, i.e. the graph whose vertices correspond to the
classical crossings of $D$ and whose edges are defined by (a) a vertex is
looped if and only if the corresponding crossing is negative and (b) two
distinct vertices are adjacent if and only if the corresponding crossings are
\textit{interlaced} on $K$, i.e. when we follow $K$ around $D$ we encounter
first one of the two crossings, then the second, then the first again, and
then the second again. See Fig. \ref{linksbis7} for an example. (As usual, the
encircled crossing is virtual.)%
\begin{figure}
[ptb]
\begin{center}
\includegraphics[
trim=0.936746in 7.895382in 1.074454in 1.343530in,
height=1.1208in,
width=4.7046in
]%
{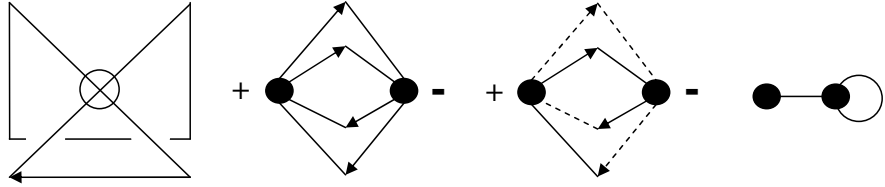}%
\caption{A trivial knot diagram $D$, the directed universe $\vec{U}$, the
Euler circuit determined by the knot, and $\mathcal{L}(D)$. When following the
Euler circuit determined by the knot, we traverse each vertex without changing
the pattern of dashes.}%
\label{linksbis7}%
\end{center}
\end{figure}

The looped interlacement graph was introduced in the first paper of this
series \cite{TZ}, where Zulli and the present author showed that if $D$ is a
classical or virtual knot diagram then $\mathcal{L}(D)$ contains enough
information about the states of $D$ to determine both the Kauffman bracket
$[D]$ and the Jones polynomial $V_{D}$ \cite{Jo}. As $\mathcal{L}(D)$ is
determined by the abstract graph $\vec{U}$ and the Euler circuit of $\vec{U}$
that corresponds to the diagrammed knot, this provides a striking conceptual
simplification of the Kauffman brackets and Jones polynomials of knots: if $D
$ and $D^{\prime}$ are knot diagrams with the same universe digraph $\vec
{U}=\overrightarrow{U^{\prime}}$, and the knots diagrammed in $D$ and
$D^{\prime}$ correspond to the same Euler circuit, then $[D]=[D^{\prime}]$ and
$V_{D}=V_{D^{\prime}}$. (To say the same thing in a different way: if two knot
diagrams $D$ and $D^{\prime}$ represent immersions in the plane of the same
abstract directed graph $\vec{U}$, with the same Euler circuit of $\vec{U}$
corresponding to both diagrammed knots and every pair of crossings
corresponding to the same vertex having the same sign, then $[D]=[D^{\prime}]$
and $V_{D}=V_{D^{\prime}}$.) For instance, in \cite{Kv} Kauffman mentioned
that the virtual knot diagram $D$ that appears at the top left in Fig.
\ref{nunknot} has $V_{D}=1$, even though $D$ is not a diagram of the unknot.
As indicated in the figure, its $\vec{U}$\ and $\mathcal{L}(D)$ are isomorphic
to those of an unknot diagram. Indeed, every classical or virtual knot diagram
with $V_{D}=1$ that we have seen has $\vec{U}$\ and $\mathcal{L}(D)$
isomorphic to those of an unknot diagram.%
\begin{figure}
[ptbh]
\begin{center}
\includegraphics[
trim=1.067032in 6.421350in 0.804810in 1.471894in,
height=2.1318in,
width=4.8075in
]%
{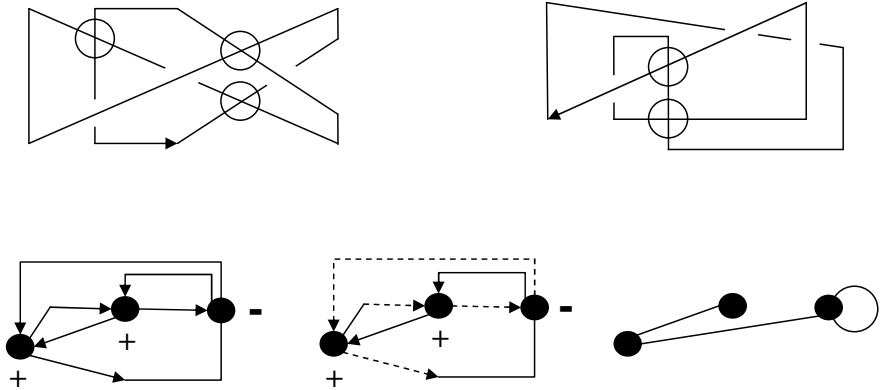}%
\caption{A nontrivial knot diagram and a trivial knot diagram give rise to the
same directed universe graph. The two knots determine the same Euler circuit
and (hence) the same looped interlacement graph. }%
\label{nunknot}%
\end{center}
\end{figure}

As discussed in \cite{TZ}, if we think of the Kauffman bracket $[D]$ as a
function of $\mathcal{L}(D)$ then this function may be extended to arbitrary
graphs. The extended function is called the \textit{graph bracket polynomial},
and it resembles the Kauffman bracket of virtual knot diagrams in several
ways, including the fact that it yields a graph Jones polynomial which is
invariant under graph operations suggested by the Reidemeister moves.

If $D$ is a diagram of an oriented multi-component link rather than a knot,
then $[D]$ can still be determined by using interlacement in $\vec{U}$, as
discussed in the second paper in this series \cite{T1}. The situation is
complicated by the fact that $U$ need not be connected, so interlacement is
defined with respect to a directed \textit{Euler system} $C$ of $\vec{U}$,
i.e. a set containing a directed Euler circuit for each connected component of
$\vec{U}$. Directed Euler systems certainly exist, but there is no canonical
way to choose a preferred one. Also, $D$ may contain link components that have
no classical crossings and hence are not detected by $U$; such link components
certainly affect $[D]$. These complications are handled by modifying $\vec{U}$
and $\mathcal{L}$ to incorporate additional information. First, $\vec{U}$ is
modified to include a \textit{free loop} corresponding to each link component
without any classical crossing. Free loops are essentially empty connected
components; they contain neither vertices nor edges but they contribute to
$c(U)$, the number of connected components of $U$. The modified interlacement
graph $\mathcal{L}(D$, $C)$ includes $c(U)-1$ free loops. The relationship
between $C$ and the link diagrammed in $D$ is recorded by \textit{marking} the
crossings of $D$ at which $C$ does not follow the incident link component(s);
the marks are transferred to the corresponding vertices of $\vec{U}$ and
$\mathcal{L}(D$, $C)$, and \textit{isomorphisms} are required to preserve free
loops, vertex marks and vertex signs. As before, the description of $[D]$ as a
function of $\mathcal{L}(D$, $C)$ extends directly to a bracket polynomial
defined for any graph that may include free loops and marked vertices, and
this bracket polynomial gives rise to a marked-graph Jones polynomial that is
invariant under the appropriate versions of the Reidemeister moves \cite{T1,
T3}.%
\begin{figure}
[ptb]
\begin{center}
\includegraphics[
trim=1.335027in 3.345990in 1.473560in 1.071829in,
height=4.7357in,
width=4.1061in
]%
{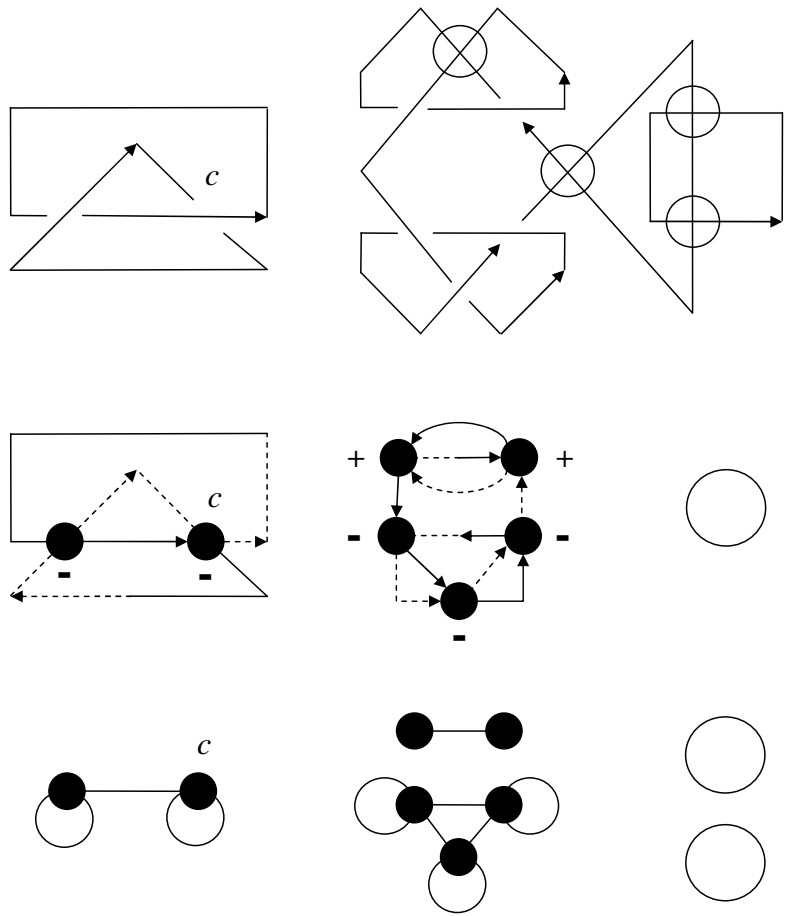}%
\caption{A marked link diagram $D$, the directed universe $\vec{U}$ with an
Euler system $C$ indicated by dashes, and $\mathcal{L}(D,C)$. The circuits of
$C$ are consistent with the orientations of the link components. }%
\label{linksbis2}%
\end{center}
\end{figure}

These definitions are illustrated in Fig. \ref{linksbis2}. The top row is a
diagram $D$ of an oriented link with four link components; virtual crossings
are encircled as usual. The mark $c$ on a crossing of $D$ specifies the Euler
system of $\vec{U}$ indicated by dashes. (That is, whenever we follow the
Euler circuit through a vertex we do not change the pattern of dashes.)
$\vec{U}$ is a 2-in, 2-out digraph with two nonempty connected components. One
nonempty connected component of $\vec{U}$ is pictured so as to resemble the
corresponding portion of $D$, and the other nonempty connected component is
not; as $\vec{U}$ is an abstract graph, we may picture it however we please.
$\vec{U}$ has a free loop corresponding to the crossing-less link component of
$D$; the free loop is indicated by a vertex-less circle. $\mathcal{L}(D$, $C)$
has looped vertices corresponding to negative crossings of $D$, and it has two
free loops because $c(U)=3$. The factors of the connected sum (the link's one
knotted component) are not interlaced, so the two nonempty connected
components of $U$ give rise to three nonempty connected components in
$\mathcal{L}(D$, $C)$. The only difference between $\mathcal{L}(D$, $C)$ and
the interlacement graph of a diagram $D^{\prime}$ obtained from $D$ by
splitting the connected sum into separate parts is that $\mathcal{L}%
(D^{\prime}$, $C)$ has more free loops.

Thistlethwaite \cite{Th} observed that the Kauffman bracket provides a
connection between knot theory (in particular, the Jones polynomials and
Kauffman brackets of classical links) and combinatorial theory (in particular,
the Tutte polynomial of planar graphs). Underlying Thistlethwaite's theorem is
a connection between circuit partitions of 4-regular plane graphs and Tutte
polynomials of their associated checkerboard graphs that was actually
discovered before the introduction of the Jones polynomial and Kauffman
bracket \cite{J1, L, Ma}. A useful technique in establishing this connection
involves giving a 4-regular plane graph an \textit{alternating orientation}
(or \textquotedblleft source-sink orientation\textquotedblright\ in the
terminology of \cite{M}), i.e. directing the edges so that the boundary of
each complementary region is coherently oriented, with (say) the boundaries of
white-colored regions oriented clockwise and the boundaries of black-colored
regions oriented counterclockwise. (The same technique has also been of use in
connection with the interlace polynomial of Arratia, Bollob\'{a}s and Sorkin
\cite{A2, A, EMS}.) If $D$ is a diagram of an oriented classical link then
clearly such a re-oriented version of $U$ is inconsistent with the link
components, in the sense that we cannot follow a link component through any
vertex without disregarding edge-directions. Directed Euler circuits of this
re-oriented version of $U$ have been called \textit{bent Euler tours}
\cite{GR}, \textit{rotating circuits} \cite{I, IM, IM1}, $\sigma
$\textit{-lines} \cite{K} and \textit{non-crossing Euler tours} \cite{L}.

Interlacement with respect to rotating circuits is the fundamental notion of
Ilyutko's and Manturov's theory of \textit{graph-links}, a relative of the
theory of looped interlacement graphs outlined above. Just as the looped
graphs considered in the first paper in this series were motivated by knot
diagrams, the first graph-links were motivated by link diagrams associated
with geometric structures called \textit{orientable atoms} \cite{IM}. Both
theories have grown more general since they were introduced, and may now be
used with arbitrary link diagrams; however they are not quite the same. The
graph-link theory is motivated by interlacement with respect to rotating
circuits \cite{IM1}, and the marked-graph theory, instead, is motivated by
interlacement with respect to Euler systems that respect the orientations of
the link components \cite{T1}. This difference is reflected in the fact that
the writhe of a marked graph is quite a simple notion -- just subtract the
number of looped vertices from the number of unlooped vertices -- while there
is no such simple notion of writhe for a graph-link. Ilyutko \cite{I} has
proven that the Reidemeister equivalence classes of graphs defined in
\cite{TZ} correspond precisely to \textit{graph-knots} (i.e. the graph-links
for which the sum of the adjacency matrix and an identity matrix is
invertible), but it is not clear whether or not this equivalence extends to
the general case.

Our purpose in the present paper is to extend the marked-graph machinery to
allow interlacement with respect to arbitrary Euler systems of $U$, thereby
developing a single theory that brings together graph-links and marked graphs.
In Section 2 we explain how to associate a marked interlacement graph
$\mathcal{L}(D,C)$ to an arbitrary Euler system $C$ in the universe graph of a
link diagram $D$; six different kinds of marks are used to record the
different ways an Euler system can pass through a crossing. The various marked
interlacement graphs that result from choosing different Euler systems in $D$
are related to each other through a marked version of local complementation,
the fundamental operation of the theory of circle graphs \cite{K, RR}. Our
vertex marks involve the letters $c$, $r$ and $u$, so we use $G_{cru}^{v}$ to
denote the marked local complement of $G$. In Section 3 we discuss
marked-graph versions of the Reidemeister moves. In Section 4 we define the
bracket polynomial of a marked graph, and show that it is invariant under
marked local complementation. If $D$ is a link diagram then the Kauffman
bracket $[D]$ is the same as the marked-graph bracket $[\mathcal{L}(D,C)]$.
The marked-graph version of the Jones polynomial \cite{Jo}, $V_{G}$, is
obtained from the bracket $[G]$ in the usual way, i.e. by evaluating $B\mapsto
A^{-1}$ and $d\mapsto-A^{2}-A^{-2}$, and then multiplying by a factor given by
the writhe and the number of vertices. The marked-graph Jones polynomial is
invariant under the marked-graph\ Reidemeister moves. In\ Sections 5 and 6 we
discuss the relationship between graph-links and marked graphs.

The Kauffman bracket of a (virtual) link diagram is recursively calculated by
eliminating classical crossings one at a time, applying the formula
$[D]=A[D_{A}]+B[D_{B}]$ at each step \cite{Kau, Kv}. The marked-graph bracket
polynomial of \cite{T1, TZ} is calculated using a recursive algorithm that is
considerably more complicated: different recursive steps are applied in
different circumstances, according to the placement of loops and marks. It
turns out, though, that using marked local complementation we can also devise
a much simpler algorithm, similar to that of the Kauffman bracket.

\begin{theorem}
\label{recursion} The marked-graph bracket polynomial of a marked graph $G$
can be calculated recursively using the following properties.

(a) The bracket polynomial of the empty graph is $[\emptyset]=1$, and the
bracket polynomial of a 1-vertex graph is given by the following.
\begin{gather*}
\lbrack(v)]=[(v,r,\ell)]=Ad+B=[(v,u)]=[(v,ur,\ell)]\\
\lbrack(v,\ell)]=[(v,r)]=A+Bd=[(v,u,\ell)]=[(v,ur)]\\
\lbrack(v,c)]=[(v,cr,\ell)]=A+B=[(v,c,\ell)]=[(v,cr)]
\end{gather*}
Here $(v)$ indicates that $v$ is unlooped and unmarked, $(v,\ell)$ indicates
that $v$ is looped and unmarked, $(v,cr)$ indicates that $v$ is unlooped and
marked $cr$, $(v,u,\ell)$ indicates that $v$ is looped and marked $u$, and so on.

(b) If $G^{\prime}$ is obtained from $G$ by removing a free loop then
$[G]=d\cdot\lbrack G^{\prime}]$.

(c) If $G^{\prime}$ is obtained from $G$ by removing an isolated vertex $v$
then $[G]=[\{v\}]\cdot\lbrack G^{\prime}]$; $[\{v\}]$ is given in part (a).

(d) If the vertex $v$ is unlooped and marked $c$, or looped and marked $cr$,
then
\[
\lbrack G]=A[G-v]+B[G_{cru}^{v}-v],
\]
where $G_{cru}^{v}$ is the marked local complement of $G$ with respect to $v$.
On the other hand, if $v$ is looped and marked $c$, or unlooped and marked
$cr$, then%
\[
\text{\ }[G]=B[G-v]+A[G_{cru}^{v}-v].
\]

(e) If $v$ has a neighbor marked $u$ or $ur$ then $[G]=[G_{cru}^{v}]$.

(f) If the vertex $w$ is unmarked or marked $r$ then $[G]=[G_{cru}^{w}]$.
\end{theorem}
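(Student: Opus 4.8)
The plan is to verify three things: that the six rules are exhaustive, so the recursion can never get stuck at a nonempty marked graph; that there is a way of applying the rules that terminates; and that each rule is consistent with the bracket polynomial $[\,\cdot\,]$ defined in Section 4, so that the value the recursion produces is the correct one. Since the assertion is only that $[G]$ \emph{can} be computed this way, confluence is not needed; for termination it is enough to exhibit one terminating strategy.

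Coverage is immediate. Let $G\neq\emptyset$. If $G$ has a free loop, (b) applies; if $G$ has an isolated vertex, (c) applies. Otherwise every vertex of $G$ has a neighbor. If some vertex of $G$ is unmarked or marked $r$ then (f) applies; if some vertex is marked $c$ or $cr$ then (d) applies; and if every vertex is marked $u$ or $ur$, then choosing any vertex $v$ and a neighbor $w$, the vertex $w$ has a neighbor (namely $v$) marked $u$ or $ur$, so (e) applies to $w$. Hence at least one rule always applies.

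For correctness, part (a) is a direct evaluation of $[\{v\}]$ from the definition of the bracket, in each of the twelve cases (six kinds of mark, each either looped or unlooped). Parts (b) and (c) follow directly from that definition: a free loop multiplies the bracket by $d$, and an isolated vertex $v$ multiplies it by the single-vertex bracket $[\{v\}]$, since the smoothing chosen at $v$ is independent of every other choice. Parts (e) and (f) are instances of the invariance of $[\,\cdot\,]$ under marked local complementation established in Section 4. The rule with real content is (d): at a vertex $v$ marked $c$ or $cr$ the two smoothings partition the states of $G$, and one shows that one part corresponds bijectively to the states of $G-v$ and the other to the states of $G_{cru}^{v}-v$, preserving all circle counts and hence all monomials $A^{a}B^{b}d^{c-1}$. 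This is the marked-graph analogue of $[D]=A[D_{A}]+B[D_{B}]$, and it can be obtained either directly from the state sum, by tracking how the relevant circle counts change under deleting $v$ and under $(\,\cdot\,)_{cru}^{v}$, or from the recursion of \cite{T1, TZ} together with the invariance in (e)--(f).

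Termination I would handle by fixing a strategy rather than proving confluence: first use (b) and (c) exhaustively to clear free loops and isolated vertices; then, if a vertex marked $c$ or $cr$ is present, use (d) to delete a vertex; and otherwise use (f) to turn an unmarked or $r$-marked vertex into one marked $c$ or $cr$ (this is the role of the letter $c$ in $G_{cru}^{v}$), or use (e) to discharge a $u$- or $ur$-mark, and then return to the earlier steps. Under this strategy each application of (c) or (d) strictly decreases $|V(G)|$, each application of (b) decreases the number of free loops without adding vertices, and only boundedly many applications of (e) and (f) intervene between consecutive vertex-deletions, so the process stops; unwinding it then expresses $[G]$ as required. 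The main obstacle is exactly this last bookkeeping: one must pin down how $(\,\cdot\,)_{cru}^{v}$ acts on marks and adjacencies in the cases covered by (e) and (f), check that each such move makes measurable progress — e.g. strictly decreases the number of $u$- and $ur$-marked vertices, or enables an immediate deletion — and confirm that the deletions do not undo that progress. Once the action of $(\,\cdot\,)^{v}_{cru}$ on marks is in hand, the base cases in (a) and the state-sum identity behind (d) are routine.
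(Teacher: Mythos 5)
Your overall architecture (verify each rule against Definition \ref{bracket}, then exhibit one terminating strategy) is the same as the paper's, and your coverage argument and your treatment of (a), (b), (c), (e), (f) are fine. But there are two genuine gaps. First, part (d) is the crux, and you leave it as a sketch phrased in terms of ``states'' and ``circle counts,'' which do not exist for an abstract marked graph: the bracket is defined by the $GF(2)$-nullities $\nu(\mathcal{A}(G)_{T})$, so the ``bijection preserving circle counts'' must be replaced by a matrix computation. The paper does this by splitting the state sum over $v\in T$ versus $v\notin T$ and invoking the term-by-term invariance $\nu(\mathcal{A}(G)_{T})=\nu(\mathcal{A}(G_{cru}^{v})_{T})$ (Theorem \ref{samen}) to identify the $v\in T$ piece with $B[G_{cru}^{v}-v]$, together with the row/column deletions prescribed by Definition \ref{adjT} for the marks $c$, $cr$, $u$, $ur$. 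The global invariance you cite for (e) and (f) (Theorem \ref{brac2}) is not sufficient here; the refined subset-by-subset statement is what makes the identification work. Your alternative suggestion, deriving (d) from the recursions of \cite{T1, TZ}, is also not straightforward, since those recursions are stated for singly marked graphs.

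Second, your terminating strategy contains a concrete error: applying (f) at an unmarked or $r$-marked vertex $w$ does \emph{not} turn $w$ into a vertex marked $c$ or $cr$. By Definition \ref{mlc} it turns $w$ into a vertex marked $u$ or $ur$; the $c\leftrightarrow cr$ toggle at the complemented vertex applies only when that vertex is already marked $c$ or $cr$. So a single application of (f) never enables (d). What produces a $c$ or $cr$ mark is an application of (e) at a vertex $v$ having a neighbor marked $u$ or $ur$: that neighbor becomes marked $cr$ or $c$. The paper's strategy is accordingly a two-step combination: if some vertex already has a $u$/$ur$ neighbor, apply (e) and then delete the resulting $c$/$cr$ vertices with (d); otherwise (since isolated vertices have been cleared) there are no $u$/$ur$ vertices at all, so one picks adjacent $v$ and $w$, each unmarked or marked $r$, applies (f) at $w$ to make it $u$/$ur$, then applies (e) at $v$, and only then deletes with (d). You correctly flagged this bookkeeping as the main obstacle, but the step you wrote down to discharge it is the one that fails.
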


Despite having six options, Theorem \ref{recursion} is quite similar to the
Kauffman bracket's recursion. Parts (a) -- (c) correspond to simple properties
involving very small portions of link diagrams, and part (d) corresponds to
the formula $[D]=A[D_{A}]+B[D_{B}]$. At first glance parts (e) and (f) may
seem to be novel complications, but it is important to remember that Theorem
\ref{recursion} is applied to abstract graphs and the Kauffman bracket,
instead, is applied to plane diagrams. When we draw $D$ in the plane, we know
which transition to call $A$ and which transition to call $B$ at each
crossing; and when we smooth a crossing to obtain $D_{A}$ and $D_{B}$, these
new diagrams are drawn in the plane so as to guarantee appropriate choices of
$A$ and $B$ smoothings at the remaining crossings. Similarly, the use of
marked local complementations in (e) and (f), together with the restriction of
(d) to vertices marked $c$ or $cr$, ensures that when we replace an abstract
graph $G$ with smaller abstract graphs during the recursion, the smaller
graphs inherit the appropriate $A$ and $B$ assignments.

At the end of the paper we briefly discuss appropriate modifications of the
results of \cite{T3}, involving the use of vertex weights in streamlining
bracket calculations.%

\begin{figure}
[ptb]
\begin{center}
\includegraphics[
trim=2.405358in 5.619082in 2.406183in 1.608814in,
height=1.7608in,
width=1.7469in
]%
{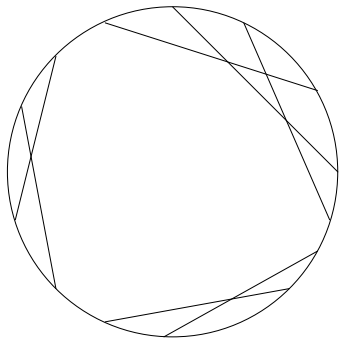}%
\caption{A chord diagram representing the Euler system of Fig. \ref{linksbis2}%
.}%
\label{linksbis102}%
\end{center}
\end{figure}

Before beginning a detailed discussion we recall that Euler systems of
4-regular graphs are equivalent to two other familiar combinatorial
structures: double occurrence words and chord diagrams. For instance, the
Euler system of Fig. \ref{linksbis2} could be represented by the word
$bcbcdefghfghde$, or by the chord diagram in Fig. \ref{linksbis102}. (In order
to carry as much information as Fig. \ref{linksbis2} does, the double
occurrence word or chord diagram should incorporate the vertex signs and
marks.) Although these three kinds of structures are equivalent to each other,
we prefer to use Euler systems in 4-regular graphs. Our first reason for this
preference is the obviousness of the observation that a typical 4-regular
graph has many different Euler systems; the equivalence relations on double
occurrence words and chord diagrams motivated by this obvious observation are
not so intuitively immediate. Our second reason is the richness of the
combinatorial theory of 4-regular graphs, which has been developed by Bouchet,
Jaeger, Las Vergnas, Martin and others in the decades since Kotzig's
foundational work \cite{K}. We believe this beautiful theory will prove to be
of great interest to knot theorists. In particular, Bouchet's comment that
\textquotedblleft the theory of isotropic systems is the theory of simple
graphs up to local complementations\textquotedblright\ \cite{Bgiso} makes it
seem likely that much of our machinery could be re-cast using marked versions
of isotropic systems or multimatroids \cite{B}.

\section{Interlacement and local complementation}%

\begin{figure}
[ptb]
\begin{center}
\includegraphics[
trim=0.934272in 6.557199in 0.940868in 0.806546in,
height=2.5278in,
width=4.8066in
]%
{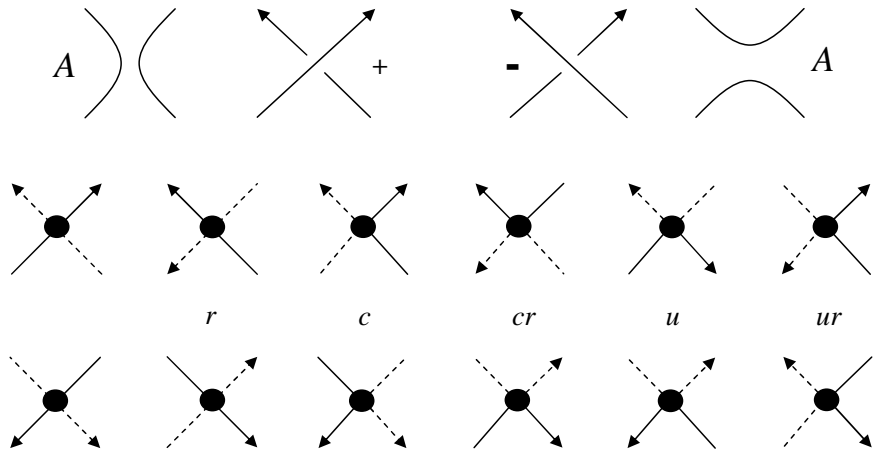}%
\caption{The six ways an\ Euler system might be related to a link at a
crossing.}%
\label{linksbis5}%
\end{center}
\end{figure}

Suppose $D$ is an oriented link diagram, $U$ is the undirected universe graph
and $C$ is an arbitrary\ Euler system of $U$, i.e. $C$ contains one Euler
circuit for each connected component of $U$. At each vertex there are six
different ways the incident circuit of an undirected Euler system $C$ might be
related to the incident link component(s). See Fig. \ref{linksbis5}. Note that
the edge-directions in the figure do not agree: those in the top row refer to
the orientations of the link components, and those in the two lower rows refer
to an orientation of the incident circuit of $C$. We do not regard the
circuits of an Euler system as carrying preferred orientations, so each
circuit can be oriented in either of the two possible ways. Consequently the
two lower rows of Fig. \ref{linksbis5} picture six cases, not twelve; the two
in each column are the same. The six cases fall into three pairs, indicated by
the letter $r$ (for \textit{rotate}).

These considerations motivate the following definitions.

\begin{definition}
\label{mgraph}A graph is \emph{multiply marked} by assigning to each of its
vertices one of the six labels of Figure \ref{linksbis5}.
\end{definition}

We often use \textit{marked} rather than \textit{multiply marked}; when we
want to focus attention on the special cases considered in \cite{T1, T3, TZ}
we say \textit{unmarked} or \textit{singly marked}.

\begin{definition}
\label{mint} Let $D$ be an oriented link diagram, and let $C$ be an Euler
system $C$ of the universe graph $U$. The vertices of $U$ are assigned marks
and signs as in Figure \ref{linksbis5}, and the\emph{\ marked interlacement
graph} $\mathcal{L}(D$, $C)$ is defined as follows.

1. Vertices correspond to classical crossings of $D$. They are assigned marks
as in Fig. \ref{linksbis5}.

2. A vertex is looped if and only if the corresponding crossing is negative$.
$

3. Two distinct vertices are adjacent if and only if they are interlaced with
respect to $C$.

4. $\mathcal{L}(D$, $C)$ has $c(U)-1$ free loops.
\end{definition}

\begin{definition}
The \emph{writhe} of a graph $G$ with $n$ vertices and $\ell$ looped vertices
is $w(G)=n-2\ell$.
\end{definition}

Fig. \ref{linksbis3} shows the result of applying these definitions to the
example of Fig. \ref{linksbis2}, with a different Euler system. To keep the
figure simple, the signs of the negative vertices of $U$ are not indicated.
Note also that the arc-directions in $D$ reflect the orientations of the link
components, but the edge-directions in $U$ reflect a choice of orientations
for the Euler circuits in $C$.%

\begin{figure}
[ptb]
\begin{center}
\includegraphics[
trim=1.329255in 2.942717in 1.477683in 1.073969in,
height=5.0401in,
width=4.1087in
]%
{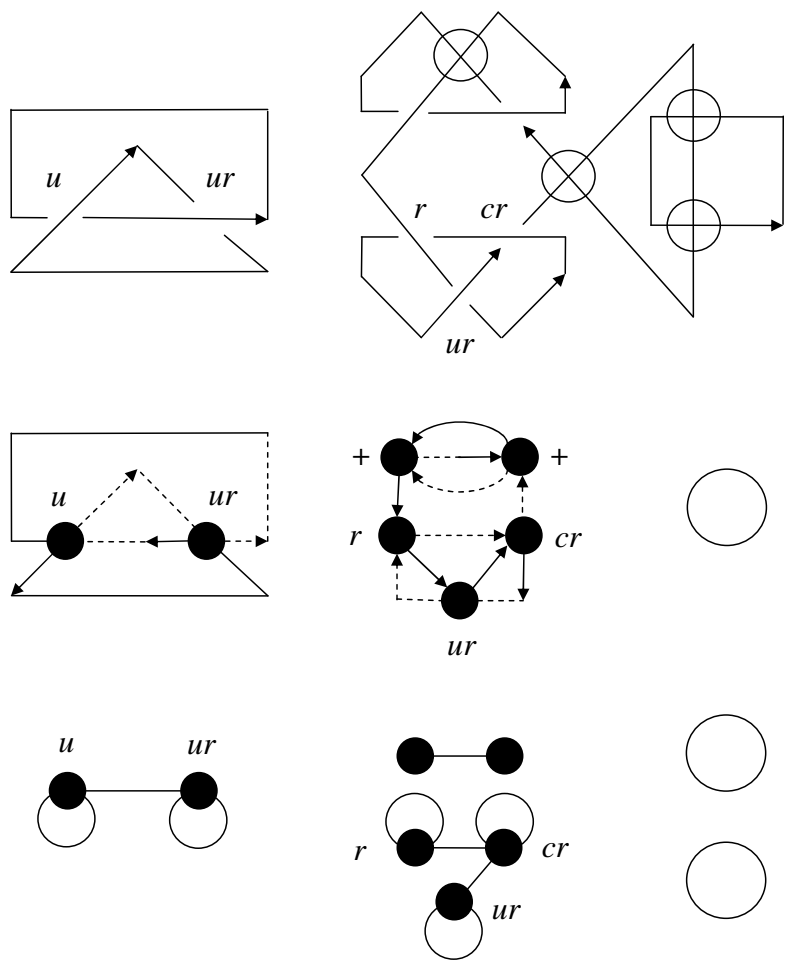}%
\caption{A marked link diagram $D$, the universe $U$ with an Euler system $C$
indicated by dashes, and $\mathcal{L}(D,C)$. The edge-directions and patterns
of dashes in $U$ indicate walks along the circuits of $C$; they are not
consistent with the link components. To reduce clutter, signs are indicated
only for the two positive vertices of $U$.}%
\label{linksbis3}%
\end{center}
\end{figure}

If $C$ is an Euler system of $G$ and $v\in V(G)$ then Kotzig \cite{K} defined
the $\kappa$\textit{-transform} $C\ast v$ to be the Euler system obtained from
$C$ by reversing one of the two $v$-to-$v$ paths in the Euler circuit of $C$
incident on $v$, and he proved that the various Euler systems of a 4-regular
graph $G$ are all related to each other through $\kappa$-transformations. (A
proof appears in \cite{T1}.) We do not regard Euler systems as carrying
preferred orientations, so it does not matter which of the two $v$-to-$v$
paths is reversed. The effect of a $\kappa$-transformation on interlacement is
easy to see: the only interlacements that are changed are those that involve
two vertices both of which appear precisely once on each $v$-to-$v$ path of
$C$, i.e. both of which are interlaced with $v$; the effect of the $\kappa
$-transformation is to toggle (reverse) the interlacement of every such pair.
Consequently, the effect of a $\kappa$-transformation $C\ast v$ on vertices of
the interlacement graph other than than $v$ itself is partly described by
\textit{local complementation}.%

\begin{figure}
[ptb]
\begin{center}
\includegraphics[
trim=2.405358in 8.786434in 1.743204in 1.140289in,
height=0.6036in,
width=3.1012in
]%
{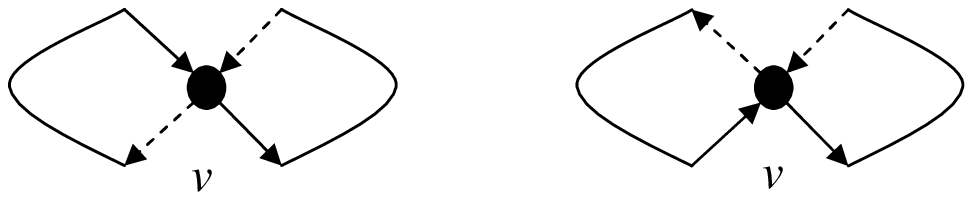}%
\caption{$C$ and $C\ast v$.}%
\label{linksbis8}%
\end{center}
\end{figure}

\begin{definition}
\label{lc} If $G$ is a graph and $v\in V(G)$ then the \emph{local complement}
$G^{v}$ is the graph obtained from $G$ by toggling edges involving only
neighbors of $v$. That is, if $w\neq v$ is adjacent to $v$ then $w$ is looped
in $G^{v}$ if and only if it is not looped in $G$; and if $v\neq x\neq y\neq
v$ and $x$, $y\,$\ are both neighbors of $v$ then $x$, $y$ are adjacent in
$G^{v}$ if and only if they are not adjacent in $G$.
\end{definition}

Observe that this definition involves changes to both loops and non-loop
edges. A different definition, which is intended for simple graphs and
consequently affects only non-loop edges, also appears in the combinatorial literature.

Read and Rosenstiehl \cite{RR} noted that for unlooped, unmarked interlacement
graphs, simple local complementation at $v$ completely describes the effect of
a $\kappa$-transformation at $v$. For us, however, this is not quite true,
because local complementation does not have the correct effect on the vertex
marks of neighbors of $v$: if $v$ and $w$ are interlaced with respect to $C$
then in $C\ast v$ the direction of one passage through $w$ is reversed, and
looking at Fig. \ref{linksbis5} we see that this changes the vertex-mark of
$w$ according to the pairings $ur\leftrightarrow c$, $u\leftrightarrow cr$,
$r\leftrightarrow$(unmarked). Moreover, a $\kappa$-transformation at $v$
affects the mark of $v$ itself, as illustrated in Fig. \ref{linksbis6}. Taking
these effects into account, we are led to the next definition.%

\begin{figure}
[ptb]
\begin{center}
\includegraphics[
trim=1.465314in 6.517622in 1.480982in 1.463336in,
height=2.0764in,
width=4.0006in
]%
{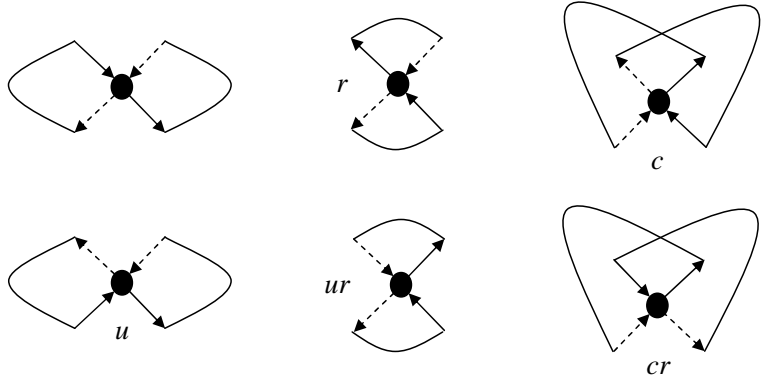}%
\caption{The effect of a $\kappa$-transformation on the vertex where it is
performed.}%
\label{linksbis6}%
\end{center}
\end{figure}

\begin{definition}
\label{mlc} If $G$ is a marked graph and $v\in V(G)$ then the\emph{\ marked
local complement} $G_{cru}^{v}$ is the graph obtained from $G$ by making the
following changes, and no others.

1. If $v$ is unmarked in $G$ then it is marked $u$ in $G_{cru}^{v}$, and vice versa.

2. If $v$ is marked $r$ in $G$ then it is marked $ur$ in $G_{cru}^{v}$, and
vice versa.

3. If $v$ is marked $c$ in $G$ then it is marked $cr$ in $G_{cru}^{v}$, and
vice versa.

4. If $w\neq v$ is an unmarked neighbor of $v$ in $G$ then $w$ is marked $r$
in $G_{cru}^{v}$, and vice versa.

5. If $w\neq v$ is a neighbor of $v$ marked $c$ in $G$, then $w$ is marked
$ur$ in $G_{cru}^{v}$, and vice versa.

6. If $w\neq v$ is a neighbor of $v$ marked $u$ in $G$, then $w$ is marked
$cr$ in $G_{cru}^{v}$, and vice versa.

7. If $v\neq x\neq y\neq v$ and $x$, $y\,$\ are both neighbors of $v$ then $x
$, $y$ are adjacent in $G_{cru}^{v}$ if and only if they are not adjacent in
$G$.
\end{definition}

Note that unlike Definition \ref{lc}, Definition \ref{mlc} does not involve
any loop-toggling, and consequently $w(G)=w(G_{cru}^{v})$. Definition
\ref{mlc} is the culmination of a rather long process of understanding the
effect on interlacement of changing Euler systems in link diagrams. \cite{TZ}
did not require changing Euler circuits at all, and \cite{IM} and \cite{T1}
both required some changing of Euler circuits, but could use appropriate
modifications of the more specialized \textit{pivot} operation. (As discussed
below, a pivot is expressible as a composition of local complementations; the
reverse is not true in general.) It was the appearance of a modified local
complement operation in \cite{IM1} that inspired the approach we take here.

If $D$ is a link diagram then Kotzig's theorem \cite{K} tells us that all the
Euler systems of $U$ are related to each other through $\kappa$%
-transformations. As the marked interlacement graph of $\mathcal{L}(D$, $C\ast
v)$ is the marked local complement $\mathcal{L}(D$, $C)_{cru}^{v}$, we
conclude that the marked interlacement graphs of $D$ are all related to each
other through marked local complementations.

\begin{theorem}
\label{diagramcomplement} Let $D$ be an oriented link diagram with a marked
interlacement graph $G=\mathcal{L}(D$, $C)$, and suppose $G^{\prime}$ is an
arbitrary marked graph. Then $G^{\prime}=\mathcal{L}(D$, $C^{\prime})$ for
some Euler system $C^{\prime}$ of $D$ if and only if $G^{\prime}$ can be
obtained from $G$ through marked local complementations.
\end{theorem}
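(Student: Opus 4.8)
The plan is to establish the two directions of the biconditional by reducing everything to Kotzig's theorem on $\kappa$-transformations, using the central fact noted in the discussion preceding the statement: for any Euler system $C$ of $U$ and any vertex $v$, one has $\mathcal{L}(D,C\ast v)=\mathcal{L}(D,C)_{cru}^{v}$. This identity is exactly what Definition \ref{mlc} was engineered to encode --- items 1--3 record the effect of a $\kappa$-transformation on the mark of $v$ itself (Fig. \ref{linksbis6}), items 4--6 record the mark-changes at the neighbors of $v$ under the pairings $ur\leftrightarrow c$, $u\leftrightarrow cr$, $r\leftrightarrow(\text{unmarked})$, item 7 is the classical local-complementation toggling of interlacements among the common neighbors, and the absence of loop-toggling matches the fact that a $\kappa$-transformation changes no crossing signs. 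So the first step is simply to state this identity cleanly (it may already be proved in the text surrounding Fig. \ref{linksbis5} and \ref{linksbis6}; if not, it is a finite case check over the six marks crossed with the adjacency/non-adjacency of $w$ to $v$, plus the observation that a $\kappa$-transform at $v$ leaves $c(U)$ and all signs unchanged so the free loops and loops of $\mathcal{L}(D,C)$ are preserved).

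For the forward direction, suppose $G'=\mathcal{L}(D,C')$ for some Euler system $C'$ of $U$. By Kotzig's theorem (quoted in the excerpt, with a proof attributed to \cite{T1}) the Euler systems $C$ and $C'$ are connected by a finite sequence of $\kappa$-transformations $C=C_0, C_1=C_0\ast v_1,\dots, C_k=C_{k-1}\ast v_k=C'$. Applying the identity $k$ times gives
\[
G'=\mathcal{L}(D,C')=\mathcal{L}(D,C)_{cru}^{v_1\,v_2\,\cdots\,v_k},
\]
so $G'$ is obtained from $G$ by a sequence of marked local complementations. For the reverse direction, suppose $G'$ is obtained from $G=\mathcal{L}(D,C)$ by marked local complementations at vertices $v_1,\dots,v_k$ in turn. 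Since the vertex set of every marked interlacement graph of $D$ is canonically identified with the crossing set of $D$, each $v_i$ names a vertex of $U$, so we may form $C'=C\ast v_1\ast v_2\cdots\ast v_k$, an Euler system of $U$. Applying the identity inductively --- at stage $i$ it says $\mathcal{L}(D,C\ast v_1\cdots\ast v_i)=\mathcal{L}(D,C\ast v_1\cdots\ast v_{i-1})_{cru}^{v_i}$ --- yields $\mathcal{L}(D,C')=G'$, which is what we wanted.

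The only genuine subtlety, and the place I expect to have to be careful, is the bookkeeping in the inductive step: the identity $\mathcal{L}(D,C\ast v)=\mathcal{L}(D,C)_{cru}^{v}$ must be applied to the \emph{already-transformed} Euler system $C\ast v_1\cdots\ast v_{i-1}$ and the \emph{already-complemented} graph $\mathcal{L}(D,C)_{cru}^{v_1\cdots v_{i-1}}$, so one needs to know the identity holds for every Euler system of $U$, not merely for the distinguished $C$ --- but this is automatic, since $C$ was an arbitrary Euler system to begin with and the identity was proved at that level of generality. One should also note that the operation $C\mapsto C\ast v$ is well defined independently of which of the two $v$-to-$v$ paths is reversed (already remarked in the excerpt, since we do not orient the circuits), and correspondingly $G_{cru}^{v}$ is well defined; these two well-definedness facts match up, so no ambiguity propagates through the induction. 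Beyond that the argument is purely formal: everything rides on Kotzig's theorem plus the single local identity.
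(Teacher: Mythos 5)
Your proposal is correct and follows essentially the same route as the paper, which derives the theorem directly from Kotzig's theorem together with the identity $\mathcal{L}(D,C\ast v)=\mathcal{L}(D,C)_{cru}^{v}$ established in the discussion surrounding Definition \ref{mlc}. Your version merely makes explicit the induction and the converse direction that the paper leaves implicit.
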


We close this section by extending the marked pivot operation of \cite{T1} to
multiply marked graphs.

\begin{lemma}
\label{pivot} Suppose $G$ is a marked graph with two adjacent vertices $v\neq
w$. Let $N_{v}$ be the set of neighbors of $v$ that are not neighbors of $w$,
$N_{w}$ the set of neighbors of $w$ that are not neighbors of $v$, and
$N_{vw}$ the set of neighbors shared by $v$ and $w$; in particular, $v\in
N_{w}$ and $w\in N_{v}$.\ Then $((G_{cru}^{v})_{cru}^{w})_{cru}^{v}$ is the
graph obtained from $G$ by making the following changes, and no others.

(a) The mark on $v$ is changed according to the pattern $c\leftrightarrow
$(unmarked), $r\leftrightarrow cr$, $u\leftrightarrow ur$.

(b) The mark of $w$ is changed according to the same pattern.

(c) The neighbors of $v$ in $((G_{cru}^{v})_{cru}^{w})_{cru}^{v}$ are the
elements of $(N_{w}-v)\cup\{w\}\cup N_{vw}$ and the neighbors of $w$ in
$((G_{cru}^{v})_{cru}^{w})_{cru}^{v}$ are the elements of $(N_{v}%
-w)\cup\{v\}\cup N_{vw}$.

(d) Every adjacency involving two vertices from different elements of
$\{N_{v}-w$, $N_{w}-v$, $N_{vw}\}$ is toggled.
\end{lemma}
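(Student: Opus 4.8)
The plan is to compute $((G_{cru}^{v})_{cru}^{w})_{cru}^{v}$ directly from Definition~\ref{mlc}, exploiting the fact that local complementation only affects the structure among neighbors of the distinguished vertex. First I would observe that the unmarked adjacency/loop part of the statement -- parts (c) and (d) -- is exactly the classical pivot formula for ordinary (unmarked) local complementation, so one could either cite the corresponding result from \cite{T1} or reprove it by the standard bookkeeping: track which of the three blocks $N_v-w$, $N_w-v$, $N_{vw}$ a vertex lies in after each of the three complementations, using that $v$ and $w$ swap roles as the edge $vw$ is never destroyed (it is toggled an odd number of times among the three steps but each step also re-examines it). Since Definition~\ref{mlc} performs no loop-toggling at all, the loop status of every vertex is preserved throughout, which is why no loops appear in the conclusion; this is worth stating explicitly.

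The substantive new content is the mark bookkeeping, and I would organize it by the position of a vertex. For $v$ itself: step~1 applies rules 1--3 of Definition~\ref{mlc} (the involution $c\leftrightarrow(\text{unmarked})$, $r\leftrightarrow cr$, $u\leftrightarrow ur$ -- call it $\sigma_v$); step~2 is a complementation at $w$, and since $v\in N_w$ at that stage, $v$'s mark is changed by rules 4--6 (the involution $ur\leftrightarrow c$, $u\leftrightarrow cr$, $r\leftrightarrow(\text{unmarked})$ -- call it $\sigma_w$); step~3 applies $\sigma_v$ again. So $v$'s net mark change is $\sigma_v\sigma_w\sigma_v$. A short case check (six marks) shows $\sigma_v\sigma_w\sigma_v=\sigma_v$ as permutations of the mark set, giving part~(a); symmetrically one gets part~(b) for $w$. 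For a vertex $x\notin\{v,w\}$: I would split into the cases $x\in N_{vw}$, $x\in N_v-w$, $x\in N_w-v$, and $x$ adjacent to neither, and in each case determine at which of the three steps $x$ is a neighbor of the distinguished vertex (this is governed precisely by parts (c)--(d), whose adjacency dynamics I have already tracked) and hence how many times, and in what order, the involutions $\sigma_v$, $\sigma_w$ are applied to $x$'s mark. The upshot I expect is that for every $x\notin\{v,w\}$ the composite mark change is the identity -- consistent with the statement, which lists no mark changes outside $\{v,w\}$.

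The main obstacle is the $x\in N_{vw}$ case: such an $x$ is a neighbor of the distinguished vertex at every one of the three steps, so its mark undergoes $\sigma_w\sigma_v\sigma_w$ (or $\sigma_v\sigma_w\sigma_v$, depending on how one reads off the roles), and one must verify this composite is the identity on the mark set; the $x\in N_v-w$ and $x\in N_w-v$ cases are easier because there $x$ is a neighbor at exactly two of the three steps, but one still has to be careful about \emph{which} two and hence whether the two applied involutions are $\sigma_v\sigma_v=\mathrm{id}$, $\sigma_w\sigma_w=\mathrm{id}$, or a genuine mixed product -- and to get this right one must use the intermediate adjacency data from part~(c) rather than just the final answer. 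I would therefore prove (c)--(d) first, stating the intermediate neighbor-sets after $G_{cru}^v$ and after $(G_{cru}^v)_{cru}^w$, and only then do the mark computation, so that at each step it is unambiguous whether a given vertex is being complemented against or not. The whole argument is a finite verification; the only real risk is a sign-type error in composing the two order-two permutations, which a single explicit table of the six marks under $\sigma_v$, $\sigma_w$, and their relevant composites will eliminate.
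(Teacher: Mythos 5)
Your overall plan---prove (c) and (d) first by tracking each vertex's adjacency to the complemented vertex through the three steps, then use those intermediate neighbor-sets to decide which mark-involutions get applied where---is exactly the paper's proof. But two of the concrete claims in your sketch are false and would derail the computation if carried out as written. First, rules 1--3 of Definition \ref{mlc} give the self-involution (unmarked)$\leftrightarrow u$, $r\leftrightarrow ur$, $c\leftrightarrow cr$, not the involution $c\leftrightarrow$(unmarked), $r\leftrightarrow cr$, $u\leftrightarrow ur$ that you call $\sigma_{v}$; the latter is the \emph{conclusion} of part (a), not the single-step rule. With the correct $\sigma_{v}$, the identity you assert, $\sigma_{v}\sigma_{w}\sigma_{v}=\sigma_{v}$, is false: for instance $c\mapsto cr\mapsto u\mapsto$(unmarked) under $\sigma_{v}$, then $\sigma_{w}$, then $\sigma_{v}$, whereas $\sigma_{v}(c)=cr$. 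The correct statement is that $\sigma_{v}\sigma_{w}\sigma_{v}$ equals a third involution, the one displayed in part (a), distinct from both $\sigma_{v}$ and $\sigma_{w}$.

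Second, your \textquotedblleft main obstacle\textquotedblright\ case is misdiagnosed: a vertex $x\in N_{vw}$ is \emph{not} a neighbor of the complemented vertex at all three steps. The first complementation (at $v$) toggles the edge between $x$ and $w$, so in $G_{cru}^{v}$ the vertex $x$ is adjacent to $v$ but not to $w$; it escapes the second complementation entirely and is hit again only at the third. Its mark therefore undergoes the neighbor-involution exactly twice and returns to itself; had your claim been right, the mark would change by an odd power of that involution and the \textquotedblleft no others\textquotedblright\ clause of the lemma would fail. Relatedly, for any $x\notin\{v,w\}$ the only involution ever applied to its mark is the neighbor-involution of rules 4--6 (your $\sigma_{w}$); the self-involution of rules 1--3 is never applied to a non-complemented vertex, so there are no mixed products to worry about---only the parity of the number of steps at which $x$ is adjacent to the complemented vertex, which the intermediate adjacency data from (c)/(d) shows is even in every case. (Also, the edge $vw$ is toggled zero times, not an odd number: rule 7 only toggles edges between two neighbors both distinct from the complemented vertex.) None of this changes your architecture, which matches the paper's, but the finite verification must be done with the correct single-step involutions and the correct intermediate adjacencies.
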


\begin{proof}
Definition \ref{mlc} tells us that the three local complementations affect the
mark of $v$ as follows: $c\leftrightarrow cr\leftrightarrow u\leftrightarrow
$(unmarked), $r\leftrightarrow ur\leftrightarrow c\leftrightarrow cr$ and
$u\leftrightarrow$(unmarked)$\leftrightarrow r\leftrightarrow ur$. The three
local complementations affect the mark of $w$ as follows: $c\leftrightarrow
ur\leftrightarrow r\leftrightarrow$(unmarked), $r\leftrightarrow
$(unmarked)$\leftrightarrow u\leftrightarrow c$, and $u\leftrightarrow
cr\leftrightarrow c\leftrightarrow ur$.

Part (c) is verified as follows. Observe first that the neighbor-sets of
vertices outside $N_{v}\cup N_{w}\cup N_{vw}$ are not affected by local
complementations at $v$ and $w$, and $v$ and $w$ remain neighbors through all
three local complementations. If $x\in N_{v}-w$ then $x$ is adjacent to both
$v$ and $w$ in $G_{cru}^{v}$, so $x$ is adjacent to $w$ and not $v$ in
$(G_{cru}^{v})_{cru}^{w}$; this remains the same in $((G_{cru}^{v})_{cru}%
^{w})_{cru}^{v}$. If $x\in N_{w}-v$ then $x$ is adjacent to $w$ and not $v$ in
$G_{cru}^{v}$, so $x$ is adjacent to both $v$ and $w$ in $(G_{cru}^{v}%
)_{cru}^{w}$, so $x$ is adjacent to $v$ and not $w$ in $((G_{cru}^{v}%
)_{cru}^{w})_{cru}^{v}$. If $x\in N_{vw}$ then $x$ is adjacent to $v$ and not
$w$ in $G_{cru}^{v}$, so $x$ is adjacent to $v$ and not $w$ in $(G_{cru}%
^{v})_{cru}^{w}$, so $x$ is adjacent to $v$ and $w$ in $((G_{cru}^{v}%
)_{cru}^{w})_{cru}^{v}$.

For part (d), if $x\in N_{v}-w$ and $y\in N_{w}-v$ then the adjacency between
$x$ and $y$ is unchanged in $G_{cru}^{v}$, then toggled in $(G_{cru}%
^{v})_{cru}^{w}$, and then unchanged in $((G_{cru}^{v})_{cru}^{w})_{cru}^{v}$.
If $x\in N_{v}-w$ and $y\in N_{vw}$ then the adjacency between $x$ and $y$ is
toggled in $G_{cru}^{v}$, then unchanged in $(G_{cru}^{v})_{cru}^{w}$, and
then unchanged in $((G_{cru}^{v})_{cru}^{w})_{cru}^{v}$. If $x\in N_{w}-v$ and
$y\in N_{vw}$ then the adjacency between $x$ and $y$ is unchanged in
$G_{cru}^{v}$, unchanged in $(G_{cru}^{v})_{cru}^{w}$, and then toggled in
$((G_{cru}^{v})_{cru}^{w})_{cru}^{v}$.

It remains to verify that no other change is made. If $x\notin N_{v}\cup
N_{w}\cup N_{vw}$ then none of the local complementations affects the mark of
$x$, or any adjacency involving $x$. If $x\neq y$ are in the same one of
$N_{v}-w$, $N_{w}-v$, $N_{vw}$ then their adjacency is toggled by two of the
three local complementations, so it remains unchanged in $((G_{cru}^{v}%
)_{cru}^{w})_{cru}^{v}$. If $x\in N_{v}-w$ then its mark is affected as
follows: $c\rightarrow ur\rightarrow c\rightarrow c$, $cr\rightarrow
u\rightarrow cr\rightarrow cr$, $u\rightarrow cr\rightarrow u\rightarrow u$,
$ur\rightarrow c\rightarrow ur\rightarrow ur$, $r\rightarrow$%
(unmarked)$\rightarrow r\rightarrow r$ , and (unmarked)$\rightarrow
r\rightarrow$(unmarked)$\rightarrow$(unmarked). If $x\in N_{w}-v$ then its
mark is affected as follows: $c\rightarrow c\rightarrow ur\rightarrow c$,
$cr\rightarrow cr\rightarrow u\rightarrow cr$, $u\rightarrow u\rightarrow
cr\rightarrow u$, $ur\rightarrow ur\rightarrow c\rightarrow ur$, $r\rightarrow
r\rightarrow$(unmarked)$\rightarrow r$, and (unmarked)$\rightarrow
$(unmarked)$\rightarrow r\rightarrow$(unmarked). Finally, if $x\in N_{vw}$
then its mark is affected as follows: $c\rightarrow ur\rightarrow
ur\rightarrow c$, $cr\rightarrow u\rightarrow u\rightarrow cr$, $u\rightarrow
cr\rightarrow cr\rightarrow u$, $ur\rightarrow c\rightarrow c\rightarrow ur$,
$r\rightarrow$(unmarked)$\rightarrow$(unmarked)$\rightarrow r$, and
(unmarked)$\rightarrow r\rightarrow r\rightarrow$(unmarked).
\end{proof}

\begin{definition}
\label{mpivot}Let $G$ be a doubly marked graph with two adjacent vertices
$v\neq w$. Then the graph $((G_{cru}^{v})_{cru}^{w})_{cru}^{v}$ is the
\emph{marked pivot} of $G$ with respect to $v$ and $w$, denoted $G_{cru}%
^{vw}.$
\end{definition}

The unmarked version of Definition \ref{mpivot} is the equality $((G^{v}%
)^{w})^{v}=G^{vw}$ relating local complements and pivots. This equality is a
familiar part of the theory of local complementation; see for instance
\cite{A, B}. As mentioned in \cite{A}, the unmarked version of part (c) of
Lemma \ref{pivot} is unnecessary; up to isomorphism, simply exchanging of the
names of $v$ and $w$ has the same effect. We include (c) because omitting it
would require more complicated versions of (a) and (b).

\section{Reidemeister equivalence}%

\begin{figure}
[h]
\begin{center}
\includegraphics[
trim=1.467788in 4.281969in 1.740731in 0.939188in,
height=4.1355in,
width=3.8052in
]%
{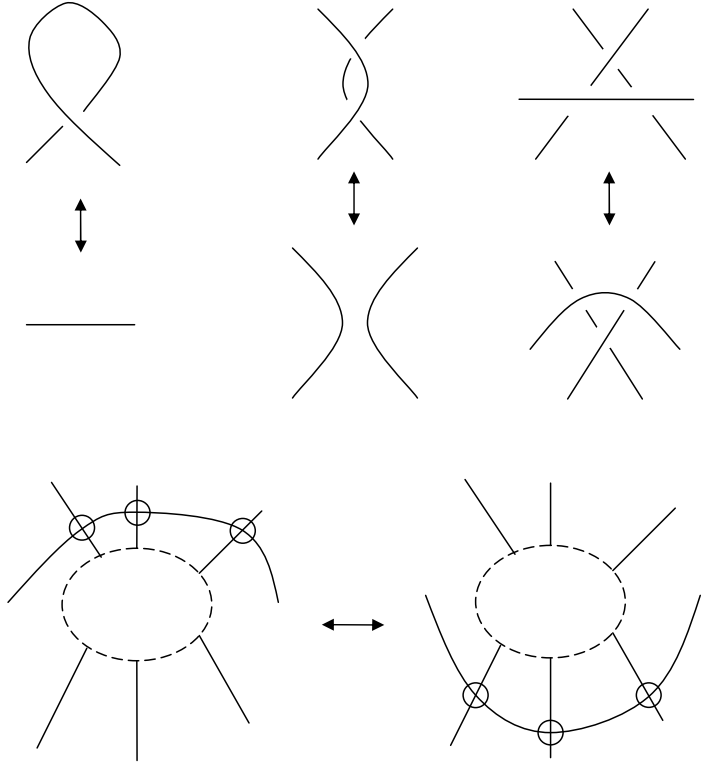}%
\caption{Three classical Reidemeister moves above a detour move.}%
\label{flyfig2}%
\end{center}
\end{figure}

Recall that diagrams representing the same virtual link type are obtained from
each other by using both \textit{classical} Reidemeister moves that involve
only classical crossings, and \textit{virtual} Reidemeister moves that involve
virtual crossings. As noted in \cite{FKM}, the virtual Reidemeister moves may
be subsumed in the more general \textit{detour} move: any arc containing no
classical crossing may be replaced by any other arc with the same endpoints,
provided that the only singularities on the new arc are finitely many double
points, and these double points are all designated as virtual crossings. See
Fig. \ref{flyfig2}. It is obvious that detour moves on $D$ have no effect on
$\mathcal{L}(D,C)$.

The effects of classical Reidemeister moves on singly marked interlacement
graphs were described in \cite{T1, TZ}, using an elegant idea due to
\"{O}stlund \cite{O}: explicit descriptions of all possible moves are not
required, so long as we describe sufficiently many moves to generate the rest
through composition.

The first kind of Reidemeister move from \cite{T1} involves adjoining or
deleting an unmarked, isolated vertex; the vertex may be looped or unlooped.
Using marked local complementation, an unmarked isolated vertex is transformed
into an isolated vertex marked $u$. It is not possible to obtain an isolated
vertex with any other mark. This reflects the fact that there are only two
ways an Euler circuit can traverse a trivial crossing in a link diagram; see
Fig. \ref{linksbis106}.%

\begin{figure}
[ptb]
\begin{center}
\includegraphics[
trim=1.070331in 8.164944in 3.346227in 1.341391in,
height=0.921in,
width=2.8997in
]%
{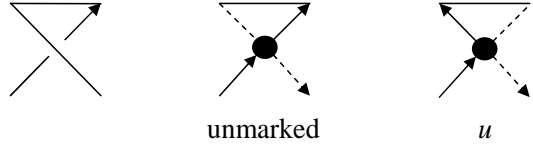}%
\caption{The two ways an Euler circuit can traverse a trivial crossing.}%
\label{linksbis106}%
\end{center}
\end{figure}

\begin{definition}
\label{R1} An $\Omega.1$ move is performed by adjoining or removing an
isolated vertex whose mark does not involve $c$ or $r$. The vertex may be
looped or unlooped.
\end{definition}

Four kinds of $\Omega.2$ moves are explicitly described in \cite{T1}.

\begin{definition}
\label{R2} Suppose $G$ is a marked graph with two vertices $v$ and $w$, $v$
looped and $w$ not looped. Then any of the following is an $\Omega
.2$\emph{\ move}, and so is the inverse transformation.

(a) Suppose $v$ and $w$ are both unmarked, and they have the same neighbors
outside $\{v$, $w\}$. Replace $G$ with $G-v-w$.

(b) Suppose $v$ is marked $c$, $w$ is unmarked, $v$ is the only neighbor of
$w$, and $z\notin\{v$, $w\}$ is a neighbor of $v$. Replace $G$ with
$G_{cru}^{vz}-v-w$.

(c) Suppose $v$ is marked $c$, $w$ is unmarked, $v$ and $w$ have the same
neighbors outside $\{v$, $w\}$, and $z\notin\{v$, $w\}$ is a neighbor of $v$
and $w$. Replace $G$ with $G_{cru}^{vz}-v-w$.

(d) Suppose $v$ is marked $c$, $w$ is unmarked, $v$ is the only neighbor of
$w$, and $w$ is the only neighbor of $v$. Replace $G$ with $G^{+}-v-w$, where
$G^{+}$ is obtained from $G$ by adjoining a free loop.
\end{definition}

Only one kind of $\Omega.3$ move is explicitly described in \cite{T1}.

\begin{definition}
\label{R3} \ Suppose $G$ is a marked graph with three unmarked vertices $u$,
$v$, $w$ such that $u$, $v$, $w$ are all adjacent to each other, $u$ is
looped, $v$ and $w$ are unlooped, and every vertex $x\notin\{u$, $v$, $w\}$ is
adjacent to either none or precisely two of $u$, $v$, $w$. An $\Omega.3$ move
is performed by replacing $G$ with the graph obtained by removing all three
edges $\{u$, $v\}$, $\{u$, $w\}$ and $\{v$, $w\}$.
\end{definition}

The inverse of an $\Omega.3$ move is also an $\Omega.3$ move, as is the
composition of an $\Omega.3$ move with $\Omega.2$ moves. Moreover the
\textquotedblleft mirror image\textquotedblright\ of an $\Omega.3$ move --
i.e. the transformation obtained by first toggling all loops, then applying an
$\Omega.3$ move, and then toggling all loops again -- is also an $\Omega.3$
move. There are many different resulting moves, including the six from
\cite{TZ} pictured in Fig. \ref{linksbis10}.%

\begin{figure}
[ptb]
\begin{center}
\includegraphics[
trim=0.800687in 7.897522in 0.717402in 1.342461in,
height=1.0473in,
width=4.7366in
]%
{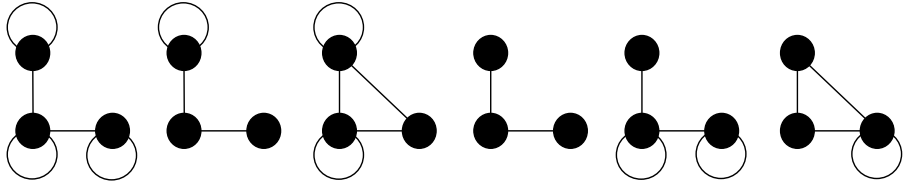}%
\caption{If every unpictured vertex is adjacent to either none or precisely
two of the three pictured vertices in one of these six configurations, an
$\Omega.3$ move may be performed by toggling all non-loop edges among the
three pictured vertices.}%
\label{linksbis10}%
\end{center}
\end{figure}

Theorem \ref{diagramcomplement} tells us how to extend the Reidemeister moves
of \cite{T1, TZ} from singly marked graphs to multiply marked graphs: simply
compose with marked local complementations.

\begin{definition}
\label{Reidemeister} A \emph{marked-graph Reidemeister move} on a marked graph
$G$ is performed by first applying marked local complementations, then
applying one of the marked-graph Reidemeister moves defined above, and then
applying marked local complementations.
\end{definition}

Two marked graphs are \textit{Reidemeister equivalent} if they can be obtained
from each other using marked local complementations and
marked-graph\ Reidemeister moves.

\begin{theorem}
\label{diagram} Let $D$ and $D^{\prime}$ be oriented link diagrams
representing the same virtual link type. Then for any Euler systems $C$ and
$C^{\prime}$ of the corresponding universe graphs, $\mathcal{L}(D^{\prime}$,
$C^{\prime})$ can be obtained from $\mathcal{L}(D$, $C)$ by using marked local
complementation and marked-graph Reidemeister moves.
\end{theorem}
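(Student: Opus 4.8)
The plan is to reduce the statement to two facts already available: Theorem \ref{diagramcomplement}, which describes how $\mathcal{L}(D,C)$ changes when the Euler system $C$ is changed, and the analysis in \cite{T1, TZ} of how classical Reidemeister moves affect \emph{singly} marked interlacement graphs -- that is, the special case in which $C$ respects the orientations of the link components. I would first recast the theorem in terms of a single equivalence class. For a link diagram $D$ with universe graph $U$, let $\mathcal{E}(D)$ be the set of all marked graphs Reidemeister equivalent to $\mathcal{L}(D,C)$ for some Euler system $C$ of $U$. By Theorem \ref{diagramcomplement} any two Euler systems of $U$ produce marked interlacement graphs related by marked local complementations, and such complementations are permitted in Reidemeister equivalence; hence $\mathcal{E}(D)$ does not depend on the choice of $C$, and it contains $\mathcal{L}(D,C)$ for \emph{every} Euler system $C$. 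Thus the theorem is equivalent to the assertion that $\mathcal{E}(D)$ depends only on the virtual link type of $D$.

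Two diagrams of the same virtual link type are connected by a finite sequence of detour moves and classical Reidemeister moves (Fig. \ref{flyfig2}), so it suffices to check that $\mathcal{E}$ is preserved by each of these. A detour move changes neither $U$, nor its Euler systems, nor any interlacement, so it leaves $\mathcal{L}(D,C)$ literally unchanged, and $\mathcal{E}$ is unchanged. For a classical Reidemeister move $D\to D'$, I would invoke \cite{T1, TZ}: since the move is supported in a disk outside of which the universe graph is unaffected, one may choose Euler systems $C$ of $U$ and $C'$ of $U'$, both respecting the link orientations, for which $\mathcal{L}(D,C)$ and $\mathcal{L}(D',C')$ are singly marked and $\mathcal{L}(D',C')$ is obtained from $\mathcal{L}(D,C)$ by a composition of marked local complementations with one of the moves $\Omega.1$, $\Omega.2$, $\Omega.3$ of Definitions \ref{R1}--\ref{R3} -- i.e., by a marked-graph Reidemeister move in the sense of Definition \ref{Reidemeister}. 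Hence $\mathcal{L}(D',C')\in\mathcal{E}(D)$, and since $\mathcal{E}(D')$ contains $\mathcal{L}(D',C')$ we conclude $\mathcal{E}(D')=\mathcal{E}(D)$. Composing these equalities along a sequence of moves joining $D$ to $D'$ gives $\mathcal{E}(D)=\mathcal{E}(D')$; in particular the arbitrary graphs $\mathcal{L}(D,C)$ and $\mathcal{L}(D',C')$ of the statement both lie in this class, so they are Reidemeister equivalent.

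The real content lies in the classical-move step -- the claim that, with orientation-respecting Euler systems, a classical Reidemeister move induces a marked-graph Reidemeister move on the interlacement graph -- and this is the only place I expect genuine difficulty. It is not new here, though: it is the combination of the singly marked analysis of \cite{T1, TZ} (which uses \"{O}stlund's observation that one need only describe enough moves to generate the rest through composition) with Definition \ref{Reidemeister}, which was set up precisely so that those moves together with marked local complementation generate the needed equivalence. Everything else is routine: the well-definedness of $\mathcal{E}(D)$ is immediate from Theorem \ref{diagramcomplement}, and a detour move leaves $\mathcal{L}$ untouched because it involves no classical crossings. The one point that genuinely requires care -- choosing orientation-respecting Euler systems on $D$ and $D'$ that match outside the disk supporting the move, and tracking the resulting change of interlacement graph -- is exactly the analysis already carried out in \cite{T1, TZ} for singly marked graphs, and nothing beyond it is needed.
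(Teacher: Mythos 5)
Your proposal is correct and follows essentially the same route as the paper, which disposes of this theorem in one line by citing the singly marked analysis of \cite{T1, TZ} together with the Section 2 machinery (Theorem \ref{diagramcomplement}); your argument simply makes explicit the reduction to detour moves and classical Reidemeister moves and the role of marked local complementation in freeing the choice of Euler system. Nothing in your write-up goes beyond, or falls short of, what the paper intends.
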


Theorem \ref{diagram} follows immediately from the results of \cite{T1, TZ}
using the machinery of Section 2.

Before introducing the bracket polynomial we take a moment to discuss mirror
images. It is certainly not surprising that the mirror image of a Reidemeister
move should be considered a Reidemeister move, and separate consideration of
mirror images involved little extra work in \cite{T1, TZ}. Nevertheless it is
worth mentioning that it is not actually necessary to consider the mirror
images of $\Omega.3$ moves separately. In \cite{TZ} we adapted some
equivalences given by \"{O}stlund \cite{O} to show that the first three
$\Omega.3$ moves pictured in Fig. \ref{linksbis10} can be obtained from each
other through composition with $\Omega.2$ moves, and the second three can also
be obtained from each other. \"{O}stlund mentioned that there are two
equivalence classes of $\Omega.3$ moves, so we were content to have two
classes too. But the difference between \"{O}stlund's ascending and descending
$\Omega.3$ moves is not the same as the difference between mirror images; for
our purposes it is actually a difference that makes no difference, and it
turns out that all the $\Omega.3$ moves can be obtained directly from each
other by composition with $\Omega.2$ moves. See Fig. \ref{onethre4}, which
illustrates ways to obtain the $\Omega.3$ moves involving the third and fourth
configurations of Fig. \ref{linksbis10} from each other. (Vertices that appear
in a horizontal row in Fig. \ref{onethre4} are presumed to have the same
neighbors outside the pictured subgraph.) The sequence of moves pictured at
the top is adapted from \cite{P}.%

\begin{figure}
[ptb]
\begin{center}
\includegraphics[
trim=0.705858in 0.935979in 2.127468in 0.942397in,
height=7.0837in,
width=4.3578in
]%
{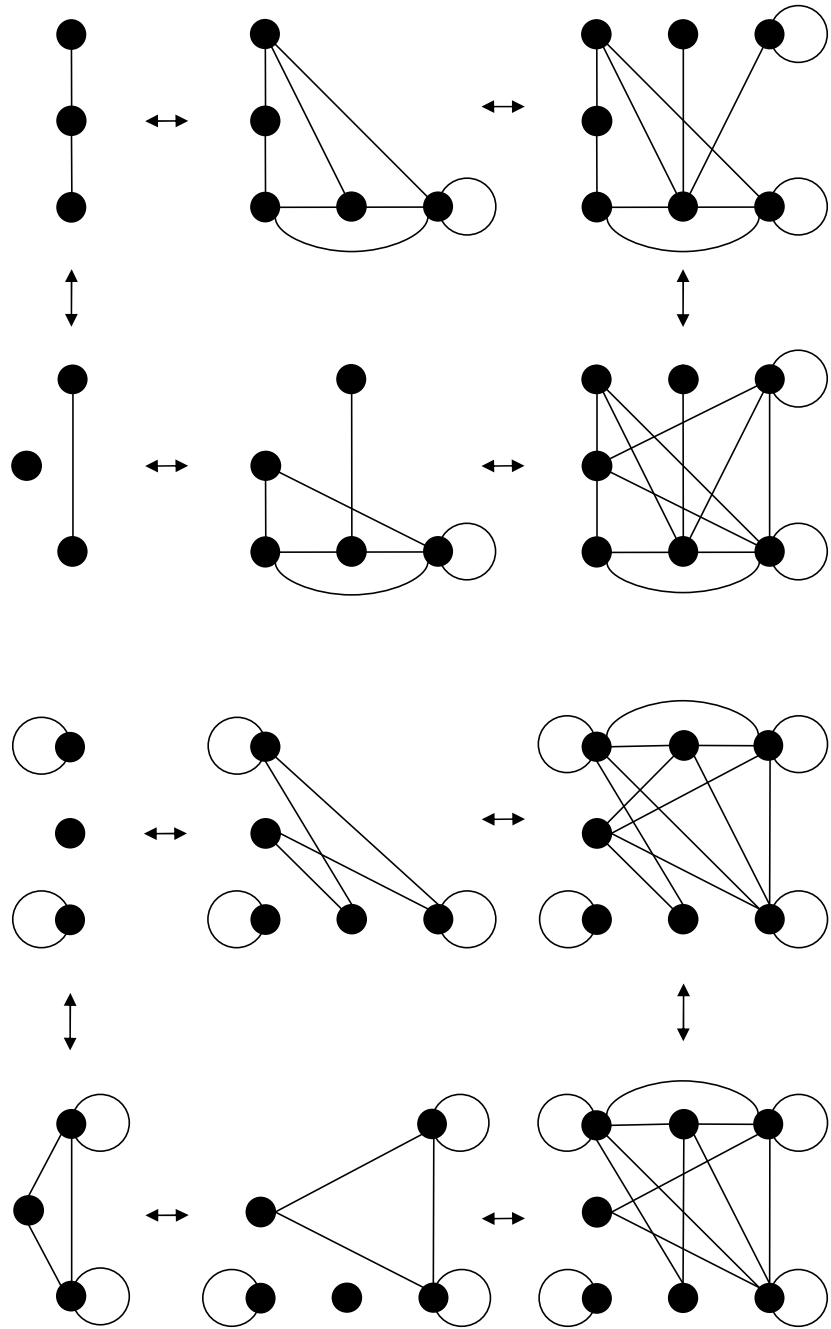}%
\caption{All of the $\Omega.3$ moves for marked graphs are inter-related
through composition with $\Omega.2$ moves.}%
\label{onethre4}%
\end{center}
\end{figure}

\section{The extended marked-graph bracket}

Suppose $U$ is any 4-regular graph, with $\phi$ free loops and $c(U)$
connected components. A \textit{circuit} in $U$ is a sequence $v_{1}$, $h_{1}
$, $h_{1}^{\prime}$, $v_{2}$, ..., $v_{k}$, $h_{k}$, $h_{k}^{\prime}$,
$v_{k+1}=v_{1}$ such that for each $i$, $h_{i}$ and $h_{i}^{\prime}$ are the
half-edges of an edge $e_{i}$ connecting $v_{i}$ to $v_{i+1}$. (It is
technically necessary to refer to half-edges because a loop is regarded as
providing two different one-edge circuits, with opposite orientations.) There
are $3^{n}$ partitions of $E(G)$ into circuits, each of which is determined by
choosing one of the three \textit{transitions} (pairings of incident
half-edges) at every vertex. Each circuit partition is also required to
include all the free loops of $U$. Let $C$ be an Euler system for $U$; choose
one of the two orientations for each circuit that appears in $U$, and let
$\vec{U}$ be the 2-in, 2-out digraph obtained from $U$ by using these
orientations to assign directions to edges. Then as indicated in Fig.
\ref{linksbis105}, the three transitions at a vertex $v$ are identified by
their relationships with $C$: one follows $C$, one is consistent with the
edge-directions of $\vec{U}$ without following $C$, and the third is
inconsistent with the edge-directions of $\vec{U}$. Note that changing the
choice of orientations for the elements of $C$ does not affect these designations.

The tool that allows us to use interlacement to describe the Kauffman bracket
is the \textit{circuit-nullity formula.} This formula has a very interesting
history; at least five different special cases have been discovered during the
last century \cite{Bu, Br, CL, Me, S, Z}. We refer to \cite{Tb} for a detailed
exposition, and only summarize the basic idea here. The three transitions
pictured in Fig. \ref{linksbis105} are represented (respectively) by three
operations on the simple interlacement graph of $\vec{U}$ with respect to $C$:
delete $v$, do nothing to $v$, and attach a loop at $v$. If $P$ is a circuit
partition of $U$ then the circuit-nullity formula states that the number of
elements of $P$ is
\[
\left\vert P\right\vert =c(U)+\nu(\mathcal{A}_{P}),
\]
where $\nu(\mathcal{A}_{P})$ is the $GF(2)$-nullity of the adjacency matrix of
the graph obtained from the interlacement graph of $\vec{U}$ with respect to
$C$ by performing, at each vertex, the operation corresponding to the
transition used in $P$.%
\begin{figure}
[ptb]
\begin{center}
\includegraphics[
trim=1.069506in 8.557521in 2.008726in 1.343530in,
height=0.6253in,
width=3.9038in
]%
{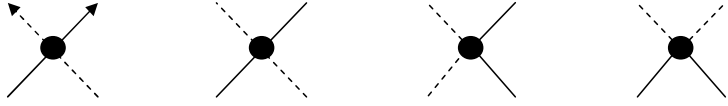}%
\caption{The three transitions at a vertex, in relation to an oriented Euler
circuit: one transition follows the circuit, the second is consistent with the
edge-directions determined by the Euler circuit, and the third is not
consistent with these edge-directions.}%
\label{linksbis105}%
\end{center}
\end{figure}

Looking at Figs. \ref{linksbis5} and \ref{linksbis105}, we see that if $U$ is
the universe of a link diagram $D$ then vertex marks determine which
transitions correspond to the $A$ and $B$ smoothings at a positive crossing as
in Table \ref{1}. The transitions corresponding to the $A$ and $B$ smoothings
at a negative crossing are simply interchanged.%

\begin{equation}%
\begin{tabular}
[c]{|ccccc|}\hline
vertex mark &  & $A$ transition &  & $B$ transition\\
&  &  &  & \\
(unmarked) &  & orientation-consistent &  & orientation-inconsistent\\
$r$ &  & orientation-inconsistent &  & orientation-consistent\\
$c$ &  & follow $C$ &  & orientation-inconsistent\\
$cr$ &  & orientation-inconsistent &  & follow $C$\\
$u$ &  & orientation-consistent &  & follow $C$\\
$ur$ &  & follow $C$ &  & orientation-consistent\\\hline
\end{tabular}
\label{1}%
\end{equation}

These considerations motivate the following definitions.

\begin{definition}
\label{adjmatrix} Let $G$ be a graph with $V(G)=\{v_{1}$, $...$, $v_{n}\}$.
The \emph{Boolean adjacency matrix} of $G$ is the $n\times n$ matrix
$\mathcal{A}(G)$ with entries in $GF(2)$ defined by: if $i\neq j$ then
$\mathcal{A}(G)_{ij}=1$ if and only if $v_{i}$ is adjacent to $v_{j}$, and
$\mathcal{A}(G)_{ii}=1$ if and only if $v_{i}$ is looped.
\end{definition}

Observe that $\mathcal{A}(G)$ is defined if $G$ has multiple edges or multiple
loops, but they do not affect it.

\begin{definition}
\label{adjT} Let $G$ be a\ marked graph with $V(G)=\{v_{1}$, $...$, $v_{n}\}$.
Suppose $T\subseteq V(G)$, and let $\Delta_{T}$ be the $n\times n$ matrix with
the following entries in $GF(2)$.%
\[
(\Delta_{T})_{ij}=\left\{
\begin{tabular}
[c]{ll}%
$0,$ & if $i\neq j$\\
$1,$ & if $i=j$ and $v_{i}\in T$ has a mark with no $r$\\
$0,$ & if $i=j$ and $v_{i}\in T$ has a mark with an $r$\\
$1,$ & if $i=j$ and $v_{i}\not \in T$ has a mark with an $r$\\
$0,$ & if $i=j$ and $v_{i}\not \in T$ has a mark with no $r$%
\end{tabular}
\right.
\]
Then $\mathcal{A}(G)_{T}$ is defined to be the submatrix of $\mathcal{A}%
(G)+\Delta_{T}$ obtained by removing the $i^{th}$ row and column if either (a)
$v_{i}$ is marked $c$ or $cr$ and $(\mathcal{A}(G)+\Delta_{T})_{ii}=0$ or (b)
$v_{i}$ is marked $u$ or $ur$ and $(\mathcal{A}(G)+\Delta_{T})_{ii}=1 $.
\end{definition}

\begin{definition}
\label{bracket} The \emph{marked-graph bracket polynomial} of a marked graph
$G$ with $\phi$ free loops and $V(G)=\{v_{1}$, $...$, $v_{n}\}$ is
\[
\lbrack G]=d^{\phi}\cdot\sum_{T\subseteq V(G)}A^{n-\left\vert T\right\vert
}B^{\left\vert T\right\vert }d^{\nu(\mathcal{A}(G)_{T})},
\]
where $\nu(\mathcal{A}(G)_{T})$ is the $GF(2)$-nullity of $\mathcal{A}(G)_{T}$.
\end{definition}

Although the definition of $[G]$ requires an ordering of $V(G)$, choosing one
ordering rather than another simply permutes the rows and columns of
$\mathcal{A}(G)$; obviously this does not affect $[G]$. The next two results
are almost as obvious.

\begin{proposition}
\label{rloop} If $G^{\prime}$ is obtained from $G$ by toggling both the loop
status and the letter $r$ in the mark of a vertex $v$, then $[G]=[G^{\prime}]$.
\end{proposition}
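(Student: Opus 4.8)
The plan is to track exactly how the operation "toggle both the loop status and the letter $r$ at $v$" affects each term $A^{n-|T|}B^{|T|}d^{\nu(\mathcal{A}(G)_T)}$ in the sum defining $[G]$. Since $n$, $|T|$, and $\phi$ are unchanged and the edge structure away from $v$'s diagonal entry is untouched, it suffices to show that for every $T\subseteq V(G)$ we have $\nu(\mathcal{A}(G)_T)=\nu(\mathcal{A}(G')_{T})$, so that the two sums match term by term. First I would fix $T$ and compare the matrices $\mathcal{A}(G)+\Delta_T$ and $\mathcal{A}(G')+\Delta_{T'}$ — here $T'=T$ as a subset, but the mark of $v$ has changed, so the diagonal entry of $\Delta$ at $v$ changes. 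The key observation is that toggling the $r$ in $v$'s mark flips whether $v\in T$ contributes a $1$ or a $0$ to $(\Delta_T)_{vv}$ (and likewise if $v\notin T$), while simultaneously toggling the loop status flips $\mathcal{A}(G)_{vv}$. These two flips cancel: $(\mathcal{A}(G')+\Delta_{T})_{vv}=(\mathcal{A}(G)+\Delta_T)_{vv}$, and every other entry of the two matrices agrees.

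Next I would check that the row/column deletion rule in Definition \ref{adjT} produces the same submatrix. The deletion at index $i$ is governed by two things: whether $v_i$ is marked with a $c$-type label ($c$ or $cr$) versus a $u$-type label ($u$ or $ur$), and the value of $(\mathcal{A}+\Delta_T)_{ii}$. Toggling the $r$ in $v$'s mark does not change whether that mark is $c$-type, $u$-type, or neither (it sends $c\leftrightarrow cr$, $u\leftrightarrow ur$, unmarked$\leftrightarrow r$), and we have just seen the relevant diagonal entry is unchanged. Hence precisely the same rows and columns are removed in forming $\mathcal{A}(G')_{T}$ as in forming $\mathcal{A}(G)_{T}$, and the resulting submatrices are literally equal. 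Therefore $\nu(\mathcal{A}(G)_T)=\nu(\mathcal{A}(G')_T)$ for every $T$, and summing gives $[G]=[G']$.

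The verification is entirely bookkeeping, so there is no real obstacle; the one place to be careful is the case analysis on $v$'s mark. I would lay it out explicitly: if $v$ is unmarked and unlooped in $G$ it becomes marked $r$ and looped in $G'$, and in both cases for $v\in T$ the pair (loop-entry, $\Delta$-entry) is $(0,1)$ versus $(1,0)$, summing to $1$; for $v\notin T$ it is $(0,0)$ versus $(1,1)$, summing to $0$ — same diagonal value, and since neither mark is $c$-type or $u$-type, $v$ is never deleted in either matrix. The analogous three-line check for the pairs $c\leftrightarrow cr$ (with loop toggle) and $u\leftrightarrow ur$ (with loop toggle) confirms that the diagonal value at $v$ and the deletion decision at $v$ are invariant in each case. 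This completes the proof.
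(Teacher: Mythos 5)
Your proof is correct and complete. The paper's own proof is a two-sentence appeal to Table \ref{1}: toggling the letter $r$ in the mark of $v$ interchanges the $A$ and $B$ transitions at $v$, as does toggling the loop status, so performing both leaves the bracket unchanged. You reach the same conclusion by direct bookkeeping in Definitions \ref{adjT} and \ref{bracket}: toggling the loop flips $\mathcal{A}(G)_{vv}$, toggling the $r$ flips $(\Delta_T)_{vv}$, the two flips cancel over $GF(2)$, and since the passage $c\leftrightarrow cr$, $u\leftrightarrow ur$, (unmarked)$\leftrightarrow r$ preserves which of the deletion criteria (a), (b) applies, exactly the same rows and columns are removed; hence $\mathcal{A}(G)_T=\mathcal{A}(G')_T$ as matrices for every $T$, and the two sums agree term by term. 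The underlying cancellation is the same in both arguments, but your version is the more self-contained one: it works entirely from the formal definition of the bracket for abstract marked graphs and proves the stronger statement that each summand is literally unchanged, whereas the paper's citation of Table \ref{1} leans on the transition table that is stated for universe graphs of link diagrams and serves only to motivate Definition \ref{adjT}. Your explicit check of the pair (loop entry, $\Delta$ entry) in each of the three mark classes is exactly the case analysis needed to make that motivation rigorous.
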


\begin{proof}
Table \ref{1} indicates that toggling the letter $r$ in the mark of $v$ has
the same effect on the bracket as toggling the loop status of $v$: the $A$ and
$B$ transitions at $v$ are interchanged. Consequently, toggling both the loop
status and the letter $r$ has no effect at all.
\end{proof}

\begin{theorem}
\label{brac1} If $D$ is a virtual link diagram then $[\mathcal{L}(D$, $C)]$ is
the same as the Kauffman bracket $[D]$.
\end{theorem}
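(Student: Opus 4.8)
The plan is to show that the state sum defining $[G]$ when $G = \mathcal{L}(D,C)$ reproduces, term by term, the Kauffman state sum $[D] = \sum_S A^{a(S)}B^{b(S)} d^{c(S)-1}$. The bookkeeping device is the circuit-nullity formula: a Kauffman state $S$ of $D$ is a circuit partition $P$ of the universe graph $U$ (each classical crossing smoothed one of two ways, every free loop included), and the number of closed curves in $S$ is $c(S) = |P| = c(U) + \nu(\mathcal{A}_P)$, where $\mathcal{A}_P$ is obtained from the simple interlacement matrix of $\vec{U}$ with respect to $C$ by performing at each vertex the graph operation (delete / do nothing / add loop) matching the transition used at that vertex. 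Since $\mathcal{L}(D,C)$ carries $\phi = c(U) - 1$ free loops, the prefactor $d^{\phi}$ in Definition~\ref{bracket} accounts exactly for the $d^{c(U)-1}$ discrepancy, reducing the claim to a bijection between subsets $T \subseteq V(G)$ and Kauffman states $S$ under which $A^{n-|T|}B^{|T|} d^{\nu(\mathcal{A}(G)_T)}$ equals $A^{a(S)} B^{b(S)} d^{c(S)-c(U)}$.

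First I would set up this bijection. For each classical crossing $v$, the state $S$ chooses either the $A$ smoothing or the $B$ smoothing; declare $v \in T$ precisely when $S$ uses the $B$ smoothing at $v$. Then $|T| = b(S)$ and $n - |T| = a(S)$, matching the monomial exponents immediately. It remains to check $\nu(\mathcal{A}(G)_T) = \nu(\mathcal{A}_P)$, i.e.\ that the matrix $\mathcal{A}(G)_T$ built from $\mathcal{A}(G) + \Delta_T$ by the deletion rule of Definition~\ref{adjT} is (up to the harmless permutation of indices and elimination of rows/columns that do not change nullity) the matrix $\mathcal{A}_P$ of the circuit-nullity formula. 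This is the crux, and it is where Table~\ref{1} and Fig.~\ref{linksbis105} do the work: for each of the six vertex marks, and for each of the two smoothings, I would read off from Table~\ref{1} which of the three transitions (follow $C$, orientation-consistent, orientation-inconsistent) is selected, then from the discussion preceding the circuit-nullity formula which matrix operation that transition corresponds to (delete $v$ / do nothing / attach a loop at $v$), and finally verify that the same operation is what Definition~\ref{adjT} performs on row/column $v$ of $\mathcal{A}(G)$. The entry $\mathcal{A}(G)_{vv}$ is $1$ iff $v$ is looped iff the crossing is negative, so the sign of the crossing is already encoded; $(\Delta_T)_{vv}$ toggles this entry according to whether $v \in T$ and whether the mark contains $r$, which is exactly the toggling needed so that the "follow $C$" transition becomes the deletion case (when the diagonal entry is made to disagree with the $c$/$cr$ loop-expectation) or the "attach a loop" case (for $u$/$ur$).

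The main obstacle is this last verification: it is a six-marks-times-two-smoothings case check that must be carried out against the precise wording of Definition~\ref{adjT}'s deletion clause (a) and (b) and its $\Delta_T$ table, and one has to be careful that the "orientation-inconsistent" transition is the one that leaves the interlacement graph alone while "orientation-consistent" is the one corresponding to the submatrix with an extra loop — or whichever convention the figures actually fix — and that the roles are swapped at negative crossings in a way consistent with the loop on $v$. I would organize this as a single lemma-style computation, perhaps displayed as an augmented version of Table~\ref{1} with a fourth column listing the matrix operation, and then simply observe that Definition~\ref{adjT} produces that operation; the identity $\nu(\mathcal{A}(G)_T) = \nu(\mathcal{A}_P)$ follows, and with it $[G] = [D]$. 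A secondary point to dispatch is the degenerate situation of a link component with no classical crossings: such a component contributes a factor $d$ to $[D]$ and a free loop to $\mathcal{L}(D,C)$, and since $\mathcal{L}(D,C)$ is defined with $c(U)-1$ free loops (one fewer than the number of connected components of $U$, counting free loops of $U$), the prefactor $d^{\phi}$ together with the $+c(U)$ in the circuit-nullity formula absorbs these exactly, so no separate argument beyond the counting already done is needed.
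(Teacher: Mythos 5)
Your proposal is correct and follows essentially the same route as the paper: the bijection $T\mapsto S(T)$ sending $v\in T$ to a $B$ smoothing at $v$, the circuit-nullity identity $c(S(T))=c(U)+\nu(\mathcal{A}(G)_{T})$, and the observation that the $\phi=c(U)-1$ free loops supply the factor $d^{c(U)-1}$. The six-marks-times-two-smoothings verification you flag as the crux is exactly what Table \ref{1} and Definition \ref{adjT} were constructed to encode, which is why the paper's proof can conclude ``the theorem follows immediately'' after citing the circuit-nullity equality.
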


\begin{proof}
For each subset $T\subseteq V(\mathcal{L}(D$, $C))$ let $S(T)$ be the Kauffman
state of $D$ that involves $B$ smoothings at the vertices of $T$ and $A$
smoothings elsewhere. The number of closed curves in $S(T)$ is related to the
binary nullity of $\mathcal{A}(G)_{T}$ by the circuit-nullity equality:
$c(S(T))=c(U)+\nu(\mathcal{A}(G)_{T})$. As $\mathcal{L}(D$, $C)$ has
$\phi=c(U)-1$ free loops, the theorem follows immediately.
\end{proof}

It follows that $[\mathcal{L}(D,C)]$ is not affected by the choice of $C$.
According to Theorem \ref{diagramcomplement}, this is equivalent to saying
that $[\mathcal{L}(D,C)]$ is invariant under marked local complementation.
This invariance actually holds for arbitrary marked graphs, not just those
that arise from link diagrams.

\begin{theorem}
\label{brac2} If $G$ is a marked graph then $[G]=[G_{cru}^{v}]$ for every
$v\in V(G)$.
\end{theorem}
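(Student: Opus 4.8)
The plan is to reduce the statement to a purely linear-algebraic identity over $GF(2)$, exactly as in the corresponding invariance proof for singly marked graphs. The key observation is that $[G]$ is built from the collection of nullities $\nu(\mathcal{A}(G)_T)$ as $T$ ranges over subsets of $V(G)$, so it suffices to exhibit a bijection $T \mapsto T'$ on subsets of $V(G)$ such that $|T| = |T'|$ (so the $A$ and $B$ exponents match) and $\nu(\mathcal{A}(G)_T) = \nu(\mathcal{A}(G_{cru}^v)_{T'})$. Since marked local complementation changes neither the number of vertices nor the number of free loops (Definition~\ref{mlc} toggles no loops, and $w(G) = w(G_{cru}^v)$), the prefactor $d^{\phi}$ and the powers of $A$, $B$ are automatically unaffected; everything comes down to the nullity equality. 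First I would pin down how the passage $G \rightsquigarrow G_{cru}^v$ interacts with the matrix $\mathcal{A}(G) + \Delta_T$: by Definition~\ref{mlc} the underlying Boolean adjacency matrix changes only in the rows/columns indexed by neighbors of $v$, where it is altered by adding (over $GF(2)$) the outer product of the neighborhood indicator with itself — i.e. $\mathcal{A}(G^v) = \mathcal{A}(G) + n_v n_v^{\mathsf T}$ off the $v$-row/column, which is precisely a congruence-type modification. The mark changes in clauses 4--6 of Definition~\ref{mlc} are exactly designed so that the diagonal correction $\Delta_T$ transforms compatibly once we define $T'$ correctly.

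The heart of the argument is choosing $T'$ and verifying the nullity equality. Guided by Table~\ref{1}, the natural definition is: a vertex $w$ lies in $T'$ if and only if the $B$-smoothing at $w$ in the context of $G_{cru}^v$ corresponds to the same transition of the underlying $4$-regular picture as the $B$-smoothing at $w$ in the context of $G$ did — in other words, $T'$ is the subset producing the same circuit partition of the universe. For vertices far from $v$ nothing changes, so $T' \cap (V \setminus (N(v) \cup \{v\})) = T \cap (V \setminus (N(v)\cup\{v\}))$; at $v$ itself and at the neighbors of $v$, the marks have been toggled according to the pairings $ur \leftrightarrow c$, $u \leftrightarrow cr$, $r \leftrightarrow \text{(unmarked)}$, and one reads off from Table~\ref{1} how membership in $T'$ must be adjusted so that the designated transition (follow $C$ / orientation-consistent / orientation-inconsistent) is preserved. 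With this $T'$ in hand, $\mathcal{A}(G_{cru}^v)_{T'}$ and $\mathcal{A}(G)_T$ should be related by: (i) a symmetric $GF(2)$-congruence by an elementary unipotent matrix (adding the $v$-row to all neighbor rows and the $v$-column to all neighbor columns), which preserves nullity; followed by (ii) deletion of a common set of rows/columns governed by the $c/cr/u/ur$-and-diagonal bookkeeping of Definition~\ref{adjT}, which by construction of $T'$ deletes the same indices on both sides. Congruence invariance of nullity then finishes it.

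The main obstacle I anticipate is clause (ii): matching up exactly which rows and columns get deleted when forming $\mathcal{A}(G)_T$ versus $\mathcal{A}(G_{cru}^v)_{T'}$. The deletion rule in Definition~\ref{adjT} depends jointly on the mark of a vertex ($c$, $cr$, $u$, or $ur$) and on the diagonal entry $(\mathcal{A}+\Delta_T)_{ii}$, and both of these can change for $v$ and for neighbors of $v$. One must check that the elementary congruence in step~(i) alters the relevant diagonal entries in precisely the way needed so that the mark-toggle $ur \leftrightarrow c$ (respectively $u \leftrightarrow cr$) flips the deletion condition in lockstep — e.g. a vertex that was marked $c$ with diagonal $0$ (hence deleted) becomes marked $cr$, and one must verify its new diagonal entry is $1$ so that it is still deleted under clause (a) of Definition~\ref{adjT}, while a vertex marked $c$ with diagonal $1$ (not deleted) becomes marked $cr$ and stays undeleted. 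A careful case analysis over the six marks, split according to whether the vertex is $v$, a neighbor of $v$, or neither, and whether it lies in $T$, will handle this; the bookkeeping is finite but genuinely needs to be done. A useful sanity check along the way is Proposition~\ref{rloop}, which already tells us that the loop/$r$ toggles at $v$ built into Definition~\ref{mlc} are bracket-neutral, so one may reduce the number of cases by first normalizing the mark of $v$ using that proposition.
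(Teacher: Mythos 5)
Your overall strategy -- prove the identity term by term as a $GF(2)$-nullity statement and push the neighbourhood-toggle of Definition \ref{mlc} through $\mathcal{A}(G)+\Delta_{T}$ by elementary operations -- is the right one, and it is essentially how the paper proceeds (Theorem \ref{samen}). But three specific steps of your plan would fail as written. First, the bijection $T\mapsto T^{\prime}$ is both unnecessary and, for an abstract marked graph, undefined: there is no underlying $4$-regular graph or circuit partition to appeal to, so ``the subset producing the same circuit partition'' is not available in the generality of the theorem. Moreover, even in the diagrammatic case the $A$ and $B$ transitions are intrinsic to each crossing and do not move when the Euler system changes, so ``same circuit partition'' forces $T^{\prime}=T$; no adjustment near $v$ is needed. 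The paper proves the nullity identity with the \emph{same} $T$ on both sides: the $r$-toggling of the neighbours' marks in Definition \ref{mlc} already flips the relevant diagonal entries of $\Delta_{T}$ for the same $T$, which is precisely the diagonal change you were hoping your congruence would manufacture.

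Second, the symmetric congruence in your step (i) produces the wrong matrix when $(\mathcal{A}(G)+\Delta_{T})_{vv}=0$: adding the $v$-row to each neighbour row and the $v$-column to each neighbour column changes a neighbour-block entry $M_{ww^{\prime}}$ to $M_{ww^{\prime}}+M_{vw^{\prime}}+M_{wv}+M_{vv}=M_{ww^{\prime}}+M_{vv}$, i.e.\ changes nothing in that case. One must instead use a one-sided row operation (still nullity-preserving, but not a congruence), as in the first display of the paper's proof of Theorem \ref{samen}. Third, and most seriously, your step (ii) assumes the same rows and columns are deleted on both sides, and this fails at $v$ itself: if $v$ is marked $c$ with $(\mathcal{A}(G)+\Delta_{T})_{vv}=0$, its row and column are deleted in forming $\mathcal{A}(G)_{T}$, while in $G_{cru}^{v}$ the vertex $v$ is marked $cr$ with diagonal entry $1$, so by Definition \ref{adjT} it is \emph{not} deleted. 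The two matrices then have different sizes, and the comparison requires the separate bordering identity (the paper's second display) showing that adjoining a row $(1,\mathbf{1},\mathbf{0})$ and a column whose only nonzero entry is that leading $1$ leaves the nullity unchanged; your framework has no place for this. (Your sanity-check example also misreads the definitions: clause (a) of Definition \ref{adjT} deletes a $c$/$cr$ vertex when the diagonal entry is $0$, not $1$, and a neighbour of $v$ marked $c$ becomes $ur$, not $cr$.) Until these points are repaired the argument does not go through.
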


Indeed, Theorem \ref{brac2} is true term by term; that is, each subset
$T\subseteq V(G)$ makes the same contribution to $[G]$ and $[G_{cru}^{v}]$.

\begin{theorem}
\label{samen} Let $G$ be a marked graph with a vertex $v$. Then for every
subset $T\subseteq V(G)$,
\[
\nu(\mathcal{A}(G)_{T})=\nu(\mathcal{A}(G_{cru}^{v})_{T}).
\]

\end{theorem}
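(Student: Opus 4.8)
The plan is to compare $\mathcal{A}(G)_T$ and $\mathcal{A}(G_{cru}^v)_T$ directly by tracking how $\mathcal{A}(G)+\Delta_T$ changes under marked local complementation at $v$, and then showing the two resulting submatrices (after the row/column deletions dictated by Definition \ref{adjT}) have the same $GF(2)$-nullity. First I would fix $T$ and recall that forming $\mathcal{A}(G_{cru}^v)+\Delta_T$ from $\mathcal{A}(G)+\Delta_T$ amounts, by Definition \ref{mlc}, to: toggling the diagonal entry at every neighbor $w$ of $v$ (since clauses 4--6 all flip whether $w$'s mark contains $r$, which by Definition \ref{adjT} toggles $(\Delta_T)_{ww}$; combined with the fact that loops are not toggled, this flips the diagonal entry of $\mathcal{A}+\Delta_T$ at each neighbor of $v$), toggling $(\mathcal{A}+\Delta_T)_{xy}$ for each pair $x,y$ of distinct neighbors of $v$ (clause 7), and changing the diagonal entry at $v$ itself according to the mark-toggle in clauses 1--3 together with the membership of $v$ in $T$. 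I would organize this as: $\mathcal{A}(G_{cru}^v)+\Delta_T = (\mathcal{A}(G)+\Delta_T) + M_v$, where $M_v$ is a symmetric $GF(2)$ matrix supported on the rows/columns indexed by $\{v\}\cup N(v)$.

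Next I would identify $M_v$ as (essentially) a rank-one or rank-two correction. Let $\mathbf{n}$ be the column vector that is the $v$-th column of $\mathcal{A}(G)+\Delta_T$ with its $v$-th entry deleted/zeroed appropriately — i.e. the indicator of $N(v)$. The toggles among neighbor pairs and the neighbor diagonals together are exactly $\mathbf{n}\mathbf{n}^{\mathsf T}$ (over $GF(2)$ this automatically produces the diagonal toggles as well as the off-diagonal ones), while the change at $v$ is a single diagonal toggle when it occurs. So $M_v$ is either $\mathbf{n}\mathbf{n}^{\mathsf T}$ (with possibly an added $e_v e_v^{\mathsf T}$). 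The heart of the argument is then a linear-algebra lemma: conjugating $\mathcal{A}+\Delta_T$ by the elementary matrix $E = I + \sum_{w\in N(v)} e_w e_v^{\mathsf T}$ (which uses the row indexed by $v$ to clear the $v$-column entries at the neighbors, and symmetrically clears the $v$-row) realizes exactly this correction when the $(v,v)$ entry of $\mathcal{A}+\Delta_T$ is $1$; and $\nu$ is invariant under congruence $X \mapsto E^{\mathsf T} X E$. One then has to handle the bookkeeping of which rows/columns get deleted: in Definition \ref{adjT} a row/column for $v$ is deleted exactly when the mark/diagonal parity is ``wrong,'' and I would check case-by-case (mark of $v$ among $c,cr,u,ur$ vs. unmarked/$r$; $v\in T$ or not; $(\mathcal{A}+\Delta_T)_{vv} \in\{0,1\}$) that the deletion pattern for $v$ is consistent before and after, and that when $v$'s row/column survives, the $(v,v)$ entry is $1$ so that the congruence above is available; when $(\mathcal{A}+\Delta_T)_{vv}=0$ for a $c$/$cr$-marked $v$, the $v$-row/column is removed on both sides, no neighbor entries change (since clauses 4--6, 7 are about the \emph{marked local complement} which still performs the same toggles — here I must double-check the degenerate case carefully), and the matrices actually coincide.

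I expect the main obstacle to be precisely this case analysis at the vertex $v$: reconciling the three sources of change at the diagonal position $(v,v)$ — the mark-toggle (clauses 1--3 of Definition \ref{mlc}), the $r$-bit toggle via $\Delta_T$, and the deletion rule of Definition \ref{adjT} — so that in every one of the roughly twelve sub-cases either (i) $v$ is deleted on both sides and the remaining matrices are literally equal, or (ii) $v$ survives on both sides with $(\mathcal{A}+\Delta_T)_{vv}=1$, making the congruence by $E$ valid and giving $\nu(\mathcal{A}(G)_T)=\nu(\mathcal{A}(G_{cru}^v)_T)$. The off-diagonal and neighbor-diagonal toggles are the ``easy'' part once packaged as $\mathbf{n}\mathbf{n}^{\mathsf T}$; everything subtle is concentrated at $v$. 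I would present the linear-algebra congruence lemma first as a self-contained statement, then dispatch the vertex-$v$ cases in a short table mirroring the style already used in the proof of Lemma \ref{pivot}.
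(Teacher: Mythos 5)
Your treatment of the neighbors of $v$ is sound and matches the paper's: the paper likewise first checks that for every $v_i\neq v$ the deletion status of the $i$-th row and column is the same on both sides, and realizes the toggle of the neighbor block as a rank-one correction. The genuine gap is in the case analysis at $v$ itself, which you correctly identify as the heart of the proof but set up with a false dichotomy. Track what Definitions \ref{mlc} and \ref{adjT} actually do at $v$: if $v$ is marked $c$ it becomes $cr$, the $r$-bit of its mark flips, so $(\mathcal{A}+\Delta_T)_{vv}$ toggles while the deletion criterion (remove iff that entry is $0$) is unchanged; hence the $v$-row and column are deleted in exactly \emph{one} of $\mathcal{A}(G)_T$ and $\mathcal{A}(G_{cru}^{v})_T$, never both. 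Similarly an unmarked $v$ with diagonal entry $1$ survives in $G$ but is deleted in $G_{cru}^{v}$ (where it is marked $u$). So your alternative (i) never occurs; and even if $v$ were deleted on both sides the residual matrices would \emph{not} coincide, because clauses 4--7 of Definition \ref{mlc} toggle the neighbor marks and adjacencies unconditionally -- marked local complementation is an operation on $G$ and has no degenerate case in which it "does nothing." The case you must handle, and have no mechanism for, is deletion on exactly one side: there you are comparing an $(m-1)\times(m-1)$ matrix with an $m\times m$ matrix, and the needed fact (which the paper proves by direct computation) is that bordering the smaller matrix by a row and column of the form $(1,\mathbf{1},\mathbf{0})$ preserves nullity and, after adding that row to the neighbor rows, converts $M_{11}$ into $\bar{M}_{11}$.

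Your congruence lemma also fails precisely where you would need it. The only situation in which $v$ survives on both sides is when $(\mathcal{A}+\Delta_T)_{vv}=0$ on both sides (e.g.\ $v$ unmarked with diagonal entry $0$, becoming $u$ with diagonal entry still $0$). Writing $X=\mathcal{A}(G)+\Delta_T$ and $E=I+\mathbf{n}e_v^{\mathsf T}$ with $\mathbf{n}$ the indicator of $N(v)$, one computes over $GF(2)$ that $EXE^{\mathsf T}=X+X_{vv}\left(\mathbf{n}e_v^{\mathsf T}+e_v\mathbf{n}^{\mathsf T}+\mathbf{n}\mathbf{n}^{\mathsf T}\right)$; when $X_{vv}=0$ this is just $X$, so the symmetric congruence does not produce the required $\mathbf{n}\mathbf{n}^{\mathsf T}$ correction. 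What works -- and what the paper does -- is a one-sided row operation: adding the $v$-row $(0,\mathbf{1},\mathbf{0})$ to each neighbor row toggles exactly the block $M_{11}$ and preserves nullity, even though it destroys symmetry. To repair your argument, drop the congruence in favor of row operations, and replace your two-way split at $v$ by the correct one: both copies survive with diagonal entry $0$; or $v$ is deleted in $G$ only; or $v$ is deleted in $G_{cru}^{v}$ only -- the latter two resolved by the bordering identity above.
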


\begin{proof}
Let $V(G)=\{v_{1},...,v_{n}\}$, with $v=v_{1}$. Let $\Delta_{T}=\Delta_{T}(G)$
and $\Delta_{T}^{\prime}=\Delta_{T}(G_{cru}^{v})$ be the diagonal matrices
that appear in Definition \ref{adjT}; they differ in the diagonal entries
corresponding to neighbors of $v$, and also in the diagonal entry
corresponding to $v$ if the mark of $v$ is $c$ or $cr$.

Suppose $i\geq2$. We claim that Definition \ref{adjT} tells us to remove the
$i^{th}$ row and column of $\mathcal{A}(G)+\Delta_{T}$ in constructing
$\mathcal{A}(G)_{T}$ if and only if it tells us to remove the $i^{th}$ row and
column of $\mathcal{A}(G_{cru}^{v})+\Delta_{T}^{\prime}$ in constructing
$\mathcal{A}(G_{cru}^{v})_{T}$. If $v_{i}$ is not a neighbor of $v$, then
$v_{i}$ has the same mark in $G_{cru}^{v}$ as in $G$, and $(\mathcal{A}%
(G)+\Delta_{T})_{ii}=(\mathcal{A}(G_{cru}^{v})+\Delta_{T}^{\prime})_{ii}$, so
the claim is satisfied. If $v_{i}$ is a neighbor of $v$ marked $c$ or $u$ in
$G$, then $v_{i}$ is marked $ur$ or $cr$ (respectively) in $G_{cru}^{v}$;
moreover, $(\mathcal{A}(G)+\Delta_{T})_{ii}\neq(\mathcal{A}(G_{cru}%
^{v})+\Delta_{T}^{\prime})_{ii}$ because $(\Delta_{T})_{ii}\neq(\Delta
_{T}^{\prime})_{ii}$. Consequently the claim is satisfied. Similarly, the
claim is satisfied if $v_{i}$ is a neighbor of $v$ marked $cr$ or $ur$.
Finally, if $v_{i}$ is a\ neighbor of $v $ that is unmarked or marked $r$ in
$G$ then $v_{i}$ is marked $r$ or unmarked (respectively) in $G_{cru}^{v}$;
either way Definition \ref{adjT} does not tell us to remove the $i^{th}$ row
and column of $\mathcal{A}(G)+\Delta_{T}$ or $\mathcal{A}(G_{cru}^{v}%
)+\Delta_{T}^{\prime}$. This completes the proof of the claim.

If $(\mathcal{A}(G)+\Delta_{T})_{11}=0$ and $v$ is not marked $c$ or $cr$ in
$G$, then the same is true in $G_{cru}^{v}$ and we verify that
\[
\nu(\mathcal{A}(G)_{T})=\nu%
\begin{pmatrix}
0 & \mathbf{1} & \mathbf{0}\\
\mathbf{1} & M_{11} & M_{12}\\
\mathbf{0} & M_{21} & M_{22}%
\end{pmatrix}
=\nu%
\begin{pmatrix}
0 & \mathbf{1} & \mathbf{0}\\
\mathbf{1} & \bar{M}_{11} & M_{12}\\
\mathbf{0} & M_{21} & M_{22}%
\end{pmatrix}
=\nu(\mathcal{A}(G_{cru}^{v})_{T})
\]
by adding the top row to every row in the second set of rows. (Bold numerals
denote rows and columns whose entries are all the same, and $\bar{M}_{11}$
denotes the matrix obtained by toggling every entry of $M_{11}$.)

If $(\mathcal{A}(G)+\Delta_{T})_{11}=0$ and $v$ is marked $c$ or $cr$ in $G$,
then $(\mathcal{A}(G_{cru}^{v})+\Delta_{T}^{\prime})_{11}=1$ and $v$ is marked
$cr$ or $c$ (respectively) in $G_{cru}^{v}$, so%
\[
\nu%
\begin{pmatrix}
M_{11} & M_{12}\\
M_{21} & M_{22}%
\end{pmatrix}
=\nu%
\begin{pmatrix}
1 & \mathbf{1} & \mathbf{0}\\
\mathbf{0} & M_{11} & M_{12}\\
\mathbf{0} & M_{21} & M_{22}%
\end{pmatrix}
=\nu%
\begin{pmatrix}
1 & \mathbf{1} & \mathbf{0}\\
\mathbf{1} & \bar{M}_{11} & M_{12}\\
\mathbf{0} & M_{21} & M_{22}%
\end{pmatrix}
\]
verifies that $\nu(\mathcal{A}(G)_{T})=\nu(\mathcal{A}(G_{cru}^{v})_{T})$. The
same calculation applies if $(\mathcal{A}(G)+\Delta_{T})_{11}=1$ and $v$ is
marked $u$ or $ur$ in $G$.

If $(\mathcal{A}(G)+\Delta_{T})_{11}=1$ and $v$ is not marked $u$ or $ur$ in
$G$, a similar calculation shows that $\nu(\mathcal{A}(G)_{T})=\nu
(\mathcal{A}(G_{cru}^{v})_{T})$.%
\[
\nu%
\begin{pmatrix}
1 & \mathbf{1} & \mathbf{0}\\
\mathbf{1} & M_{11} & M_{12}\\
\mathbf{0} & M_{21} & M_{22}%
\end{pmatrix}
=\nu%
\begin{pmatrix}
1 & \mathbf{1} & \mathbf{0}\\
\mathbf{0} & \bar{M}_{11} & M_{12}\\
\mathbf{0} & M_{21} & M_{22}%
\end{pmatrix}
=\nu%
\begin{pmatrix}
\bar{M}_{11} & M_{12}\\
M_{21} & M_{22}%
\end{pmatrix}
.
\]

\end{proof}

With Theorems \ref{brac1} and \ref{brac2} in hand, we conclude that Kauffman's
classical construction of the Jones polynomial from the bracket \cite{Kau}
extends directly to multiply marked graphs.

\begin{definition}
The \emph{reduced marked-graph bracket polynomial} $\left\langle
G\right\rangle $ is the image of the three-variable marked-graph bracket under
the evaluations $B\mapsto A^{-1}$ and $d\mapsto-A^{-2}-A^{2}$.
\end{definition}

\begin{definition}
\label{Jones} The\emph{\ marked-graph Jones polynomial} of a graph with $\ell$
looped vertices and $n-\ell$ unlooped vertices is%
\[
V_{G}(t)=(-1)^{n}\cdot t^{(3n-6\ell)/4}\cdot\left\langle G\right\rangle
(t^{-1/4}).
\]

\end{definition}

\begin{theorem}
The reduced bracket is invariant under marked local complementations and
marked-graph\ Reidemeister moves of types $\Omega.2$ and $\Omega.3$. The
marked-graph Jones polynomial is invariant under marked local complementations
and all three types of marked-graph\ Reidemeister moves.
\end{theorem}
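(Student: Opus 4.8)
The plan is to split the statement into invariance under marked local complementation, which is immediate, and invariance under the Reidemeister moves, which I will reduce to the basic moves of Definitions~\ref{R1}--\ref{R3}. For marked local complementation, Theorem~\ref{brac2} gives $[G]=[G_{cru}^{v}]$ for the three-variable bracket; since $\langle G\rangle$ is obtained from $[G]$ by the fixed substitutions $B\mapsto A^{-1}$, $d\mapsto-A^{-2}-A^{2}$, and $V_{G}$ from a further multiplication of $\langle G\rangle(t^{-1/4})$ by $(-1)^{n}t^{(3n-6\ell)/4}$, and since a marked local complement has the same vertex set, the same looped vertices (Definition~\ref{mlc} toggles no loops) and the same free loops, it follows at once that $\langle G\rangle=\langle G_{cru}^{v}\rangle$ and $V_{G}=V_{G_{cru}^{v}}$. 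By Definition~\ref{Reidemeister} every marked-graph Reidemeister move is a marked local complement, then a basic $\Omega.1$, $\Omega.2$ or $\Omega.3$ move, then a marked local complement; so it remains only to check that $\langle G\rangle$ is invariant under the basic $\Omega.2$ and $\Omega.3$ moves and that $V_{G}$ is invariant under the basic $\Omega.1$, $\Omega.2$ and $\Omega.3$ moves.

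For $\Omega.1$ I would compute directly from Theorem~\ref{recursion}. Adjoining an isolated vertex whose mark is empty or $u$ multiplies $[G]$ by $[(v)]=Ad+B$ if the vertex is unlooped and by $[(v,\ell)]=A+Bd$ if it is looped; the substitution above turns these into $-A^{3}$ and $-A^{-3}$. Thus $\langle G\rangle$ is \emph{not} $\Omega.1$-invariant. But adjoining an unlooped (resp.\ looped) such vertex changes $(n,\ell)$ by $(1,0)$ (resp.\ $(1,1)$), which multiplies the Jones prefactor by $-t^{3/4}$ (resp.\ $-t^{-3/4}$); evaluated at $A=t^{-1/4}$ this exactly cancels the factor $-A^{3}$ (resp.\ $-A^{-3}$), so $V_{G}$ is $\Omega.1$-invariant. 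This is just the Kauffman-bracket bookkeeping for Reidemeister~I.

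For $\Omega.2$ and $\Omega.3$ I would prove $\langle G\rangle$-invariance by a term-by-term analysis of Definition~\ref{bracket}. Let $W$ be the set of vertices singled out by the move ($\{v,w\}$ for $\Omega.2$, $\{u,v,w\}$ for $\Omega.3$) and let $G'$ be the graph the move produces. Grouping the subsets $T\subseteq V(G)$ by $T\cap W$, I would show that for each fixed $T_{0}=T\setminus W$ the total contribution to $\langle G\rangle$ of the subsets $T$ with $T\setminus W=T_{0}$ equals the total contribution to $\langle G'\rangle$ of the subsets $T'$ with $T'\setminus W=T_{0}$. The structural input is that the hypotheses of Definitions~\ref{R2} and~\ref{R3} force the rows of $\mathcal{A}(G)+\Delta_{T}$ indexed by $W$ to agree off the $W\times W$ block: the hypothesis in \ref{R2}(a) that $v$ and $w$ have the same neighbors outside $\{v,w\}$ says this directly, while the preliminary marked pivots $G_{cru}^{vz}$ (or the adjoined free loop in~\ref{R2}(d)), together with the $\Omega.3$ condition that every outside vertex meets none or exactly two of the pictured vertices, are what produce it in the other cases. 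Given this agreement, the nullities $\nu(\mathcal{A}(G)_{T})$ that occur for a fixed $T_{0}$ take only two consecutive values, so the signed combination telescopes --- exactly as in Kauffman's verification of Reidemeister-II and Reidemeister-III invariance of $\langle D\rangle$ --- once one also records, case by case, which rows and columns Definition~\ref{adjT} deletes (this is governed by the $c$, $cr$, $u$, $ur$ marks on the vertices of $W$, and for an $\Omega.3$ move those three vertices are unmarked, so no such deletion occurs). Finally each $\Omega.2$ move changes $(n,\ell)$ by $(-2,-1)$ and each $\Omega.3$ move changes neither, so the Jones prefactor is unchanged in both cases and $\langle\cdot\rangle$-invariance promotes to $V_{(\cdot)}$-invariance.

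I expect the $\Omega.3$ step to be the main obstacle: the distinguished block is now $3\times3$, there are eight subsets to combine for each $T_{0}$, and tracking how the nullity of a bordered $GF(2)$-matrix moves is delicate. The cleanest way around it is classical: derive $\Omega.3$-invariance of $\langle G\rangle$ from $\Omega.2$-invariance together with the marked-local-complement invariance already in hand, using Fig.~\ref{onethre4} to reduce to a single $\Omega.3$ configuration. One can also short-circuit the whole of $\Omega.2$ and $\Omega.3$ by citing \cite{T1, TZ}: these are precisely the moves treated there, under which the marked-graph bracket of \cite{T1, TZ} is known to be invariant, and on singly marked graphs that bracket agrees with the one defined here.
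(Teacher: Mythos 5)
Your proposal is correct and follows essentially the same route as the paper: invariance under marked local complementation from Theorem \ref{brac2}, invariance under $\Omega.2$ and $\Omega.3$ via the matrix-nullity arguments of \cite{T1} (which you sketch in more detail by grouping subsets $T$ according to $T\cap W$), and invariance of $V_G$ under $\Omega.1$ from the cancellation of $-A^{\pm 3}$ against the change in the prefactor $(-1)^n t^{(3n-6\ell)/4}$. The paper's proof is just a compressed version of this, deferring the $\Omega.2$/$\Omega.3$ computations to \cite{T1} exactly as your fallback does.
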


\begin{proof}
The invariance of $\left\langle G\right\rangle $ under marked local
complementations follows from Theorem \ref{brac2}, and the invariance of
$\left\langle G\right\rangle $ under $\Omega.2$ and $\Omega.3$ moves is proven
using the same matrix-nullity arguments that appear in \cite{T1}. The
invariance of the Jones polynomial follows from the fact that the effects of
an $\Omega.1$ move on $(-1)^{n}\cdot t^{(3n-6\ell)/4}$ and $\left\langle
G\right\rangle (t^{-1/4})$ cancel each other.
\end{proof}

\section{Some equivalence relations}

In this section we briefly discuss several equivalence relations that come to
mind when we consider links and marked graphs.

1. The finest interesting equivalence relation on marked graphs is generated
by the marked pivots of Definition \ref{mpivot}. As proved in \cite{T1},
results of Kotzig \cite{K}, Pevzner \cite{Pev} and Ukkonen \cite{U} imply that
if $D$ is an oriented link diagram and $C$ is any directed Euler system of $D$
then the equivalence class of $\mathcal{L}(D,C)$ is the set of interlacement
graphs $\mathcal{L}(D,C^{\prime})$ corresponding to various directed Euler
systems $C^{\prime}$ of $D$.

2. A strictly coarser equivalence relation on marked graphs is generated by
marked local complementation. Theorem \ref{diagramcomplement} tells us that
this relation extends the relation tying $\mathcal{L}(D$, $C)$ to
$\mathcal{L}(D$, $C^{\prime})$ for arbitrary Euler systems $C$ and $C^{\prime
}$ in an oriented link diagram $D$, i.e. if $\mathcal{G}_{cru}$ denotes the
set of equivalence classes of multiply marked graphs under this relation and
$\mathcal{D}$ denotes the set of oriented link diagrams then the marked
interlacement graph construction provides a well-defined function
$\mathcal{L}:\mathcal{D}\rightarrow\mathcal{G}_{cru}$. Two singly marked
graphs that arise from link diagrams are equivalent under this relation if and
only if they are equivalent under the first relation; we do not know whether
or not this property extends to arbitrary singly marked graphs. Theorem
\ref{brac2} tells us that the bracket $[~]$ is well-defined on $\mathcal{G}%
_{cru}$, and Theorem \ref{brac1} tells us that the Kauffman bracket is defined
on $\mathcal{D}$ by the composition $[~]\circ\mathcal{L}$.

3. \textit{Reidemeister equivalence} is the equivalence relation $\sim$ on
marked graphs generated by marked-graph\ Reidemeister moves and marked local
complementation. Theorem \ref{diagram} tells us that this relation extends the
relation tying $\mathcal{L}(D$, $C)$ to $\mathcal{L}(D^{\prime}$, $C^{\prime
})$ for arbitrary Euler systems $C$ and $C^{\prime}$ in diagrams $D$ and
$D^{\prime}$ representing the same virtual link type. That is, if $\sim$
denotes the link type equivalence relation on $\mathcal{D}$ then
$\mathcal{L}:\mathcal{D}\rightarrow\mathcal{G}_{cru}$ induces a well-defined
function ${\widetilde{\mathcal{L}}}:\mathcal{D}/\!\sim~\rightarrow
~\mathcal{G}_{cru}/\!\sim$. The marked-graph Jones polynomial is a
well-defined function on $\mathcal{G}_{cru}/\!\sim$, whose composition with
${\widetilde{\mathcal{L}}}$ is the familiar Jones polynomial of virtual links.

4. \textit{Regular isotopy} is the finer equivalence relation that does not
involve $\Omega.1$ moves. The reduced bracket and the writhe are well-defined
modulo regular isotopy.

5. There are several equivalence relations on link diagrams that are connected
to the functions $\mathcal{L}$ and ${\widetilde{\mathcal{L}}}$. For instance,
suppose $D$ and $D^{\prime}$ are link diagrams and there is an isomorphism
between the oriented universe graphs $\vec{U}$ and $\overrightarrow{U^{\prime
}}$ that maps the directed circuits corresponding to the link components in
$D$ to the directed circuits corresponding to the link components in
$D^{\prime}$. Then $\mathcal{L}$ cannot distinguish between $D$ and
$D^{\prime}$. If $D_{1}$ and $D_{2}$ are diagrams then $\mathcal{L}$ cannot
distinguish between different connected sums $D_{1}\#D_{2}$, or between a
split union $D_{1}\cup D_{2}$ and a diagram obtained by adding a free loop to
a connected sum $D_{1}\#D_{2}$.

6. Every marked graph $G$ has an $r$\textit{-simplification} $G_{r}$ obtained
by removing the $r$ from every vertex of $G$ whose mark includes one, and
toggling the loop status of each such vertex. Proposition \ref{rloop} tells us
that $[G]=[G_{r}]$, so when discussing $[G]$ it is reasonable to consider the
equivalence relation generated by local complementation and $r$%
-simplification, and when discussing $\left\langle G\right\rangle $ it is
reasonable to consider the equivalence relation generated by regular isotopy
and $r$-simplification.

The first versions of this paper incorporated $r$-simplification throughout.
However, in link diagrams the loop status and the $r$ status of a crossing
reflect different kinds of information: the loop status reflects the sign of
the crossing, and the $r$ status reflects the way an Euler circuit traverses
the crossing. Consequently $r$-simplification involves the loss of valuable
information about link diagrams. For example, Fig. \ref{linksbis104} shows
that even though $\mathcal{L}(D,C)$ determines both the writhe and the Jones
polynomial, $\mathcal{L}(D,C)_{r}$ determines neither. (In any diagram of a
multi-component link, reversing the orientation of one link component will
have the same effect: every crossing involving that link component and another
will have both its loop status and its $r$ status toggled.)%

\begin{figure}
[ptb]
\begin{center}
\includegraphics[
trim=1.066208in 6.690911in 1.071155in 0.937048in,
height=2.3298in,
width=4.6086in
]%
{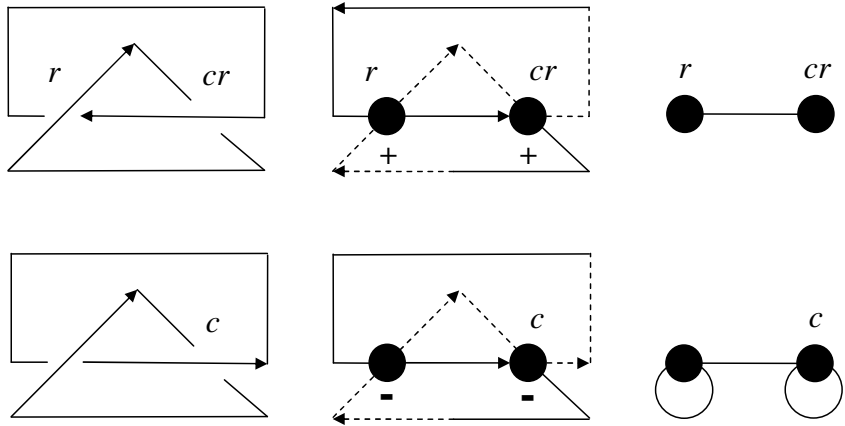}%
\caption{Diagrams of the positive and negative Hopf link have associated
interlacement graphs whose $r$-simplifications are isomorphic.}%
\label{linksbis104}%
\end{center}
\end{figure}

\section{Graph-links}

\begin{definition}
\cite{IM1} A \emph{labeled graph} $G$ is a simple graph each of whose vertices
is labeled by a pair $(a$, $\alpha)\in\{0$, $1\}\times\{+$, $-\}$.
\end{definition}

\begin{definition}
\cite{IM1} A \emph{graph-link} is an equivalence class of labeled graphs under
the equivalence relation generated by the following operations.

$\Omega_{g}1$. Adjoin or remove an isolated vertex with label $(0$, $\pm)$.

$\Omega_{g}2$. Adjoin or remove a pair of non-adjacent (resp. adjacent)
vertices that are labeled $(0$, $\pm\alpha)$ (resp. $(1$, $\pm\alpha)$) and
have the same adjacencies with other vertices.

$\Omega_{g}3$. Suppose $G$ has three distinct vertices $v$, $w$, $x$ labeled
$(0$, $-)$, such that the only neighbors of $x$ are $v$ and $w$, which are not
neighbors of each other. Then change the labels of $v$ and $w$ to $(0$, $+)$,
make $x$ adjacent to every vertex that is adjacent to precisely one of $v$,
$w$, and remove the edges connecting $x$ to $v$ and $w$. (The inverse of this
operation is also an $\Omega_{g}3$ move.)

$\Omega_{g}4$. Suppose $G$ has two adjacent vertices $v$ and $w$ labeled $(0
$, $\alpha)$ and $(0$, $\beta)$. Replace $G$ with $((G^{v})^{w})^{v}$ and then
change the labels of $v$ and $w$ to $(0$, $-\beta)$ and $(0$, $-\alpha)$
respectively. (The inverse is also an $\Omega_{g}4$ move.)

$\Omega_{g}4^{\prime}$. Suppose $G$ has a vertex $v$ with label $(1$,
$\alpha)$. Replace $G$ with $G^{v}$, change the label of $v$ to $(1$,
$-\alpha)$, and change the label of each neighbor of $v$ by changing the first
coordinate and leaving the second coordinate the same. (The inverse is also an
$\Omega_{g}4^{\prime}$ move.)
\end{definition}

\begin{definition}
The marked graph $mark(G)$ associated to a labeled graph $G$ is obtained by
preserving all non-loop edges and changing labels to loop-mark combinations as
follows: $(0$, $+)$ becomes $u$ with a loop; $(0$, $-)$ becomes $u$ with no
loop; $(1$, $+)$ becomes $c$ with no loop; and $(1$, $-)$ becomes $c$ with a loop.
\end{definition}

The fact that the vertex marks in $mark(G)$ do not involve the letter $r$
indicates that the relationship between marked graphs and graph-links involves
the notion of $r$-simplification mentioned in Section 5.%

\begin{figure}
[ptb]
\begin{center}
\includegraphics[
trim=0.667101in 1.337113in 0.803160in 1.204471in,
height=5.7363in,
width=4.772in
]%
{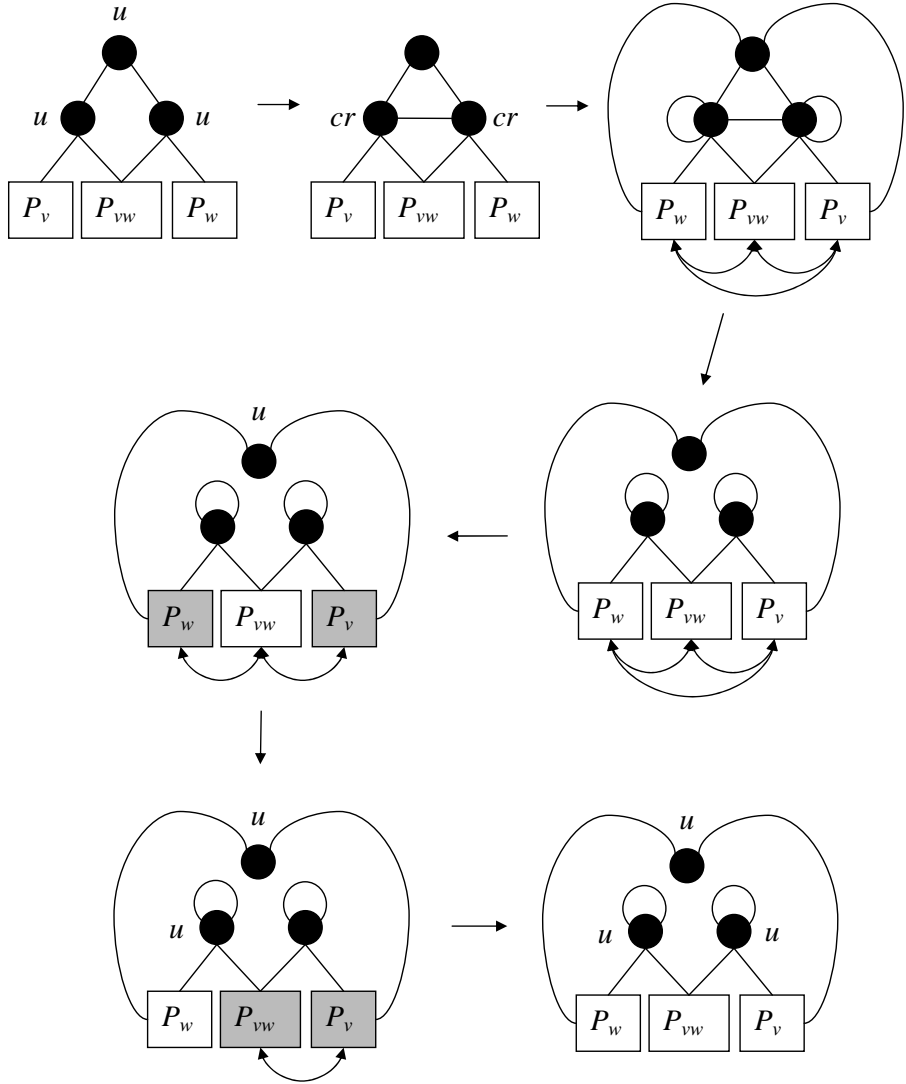}%
\caption{The marked-graph version of the graph-link $\Omega_{g}3$ move is a
composition of $r$-simplifications with a marked local complementation, a
marked pivot, one of the $\Omega.3$ moves of Figure \ref{linksbis10}, and
three marked local complementations.}%
\label{linksbis11}%
\end{center}
\end{figure}

\begin{theorem}
\label{graphlink} If two labeled graphs $G$ and $H$ define the same graph-link
then the corresponding marked graphs $mark(G)$ and $mark(H)$ are equivalent
under marked local complementation, $r$-simplification and marked-graph
Reidemeister moves.
\end{theorem}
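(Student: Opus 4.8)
The plan is to verify the claim one graph-link move at a time: for each of the five generating operations $\Omega_g 1,\dots,\Omega_g 4'$, I would show that if $H$ is obtained from $G$ by that move, then $mark(H)$ is obtained from $mark(G)$ by a sequence of marked local complementations, $r$-simplifications, and marked-graph Reidemeister moves. Since these three families of operations are closed under inverses (Definition~\ref{mlc} is its own inverse, $r$-simplification is an involution, and the marked-graph Reidemeister moves are defined symmetrically), it suffices to treat each move in one direction. Throughout, the key bookkeeping device is the dictionary $(0,+)\mapsto u$-loop, $(0,-)\mapsto u$-unlooped, $(1,+)\mapsto c$-unlooped, $(1,-)\mapsto c$-loop, together with the observation that a marked local complement at a vertex whose mark is $u$ or $ur$ (resp.\ $c$ or $cr$) sends it to $c$ or $cr$ (resp.\ $u$ or $ur$), so the ``$a$-coordinate'' of a labeled graph is exactly the presence of a $c$ in the mark, up to $r$-simplification.

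First I would dispatch the easy moves. The move $\Omega_g 1$ adjoins or removes an isolated vertex with label $(0,\pm)$, i.e.\ an isolated vertex marked $u$ (looped or not); this is precisely an $\Omega.1$ move of Definition~\ref{R1}, since the mark $u$ involves neither $c$ nor $r$. The move $\Omega_g 2$ adjoins or removes a twin pair: a non-adjacent pair both labeled $(0,\pm\alpha)$, or an adjacent pair both labeled $(1,\pm\alpha)$, with the same outside adjacencies. In $mark$-language these become, respectively, a pair of non-adjacent vertices marked $u$ with opposite loop status and equal outside neighbors, and a pair of adjacent vertices marked $c$ with opposite loop status and equal outside neighbors. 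Neither of these is literally one of the $\Omega.2$ moves in Definition~\ref{R2}, whose forms assume $v$ and $w$ are unmarked or marked $c$; so here I would first $r$-simplify one vertex of the pair if needed, then apply marked local complementations to bring the configuration into the exact shape of Definition~\ref{R2}(a) (for the non-adjacent case, after suitable local complements the two vertices can be made unmarked and unlooped/looped with equal outside neighborhoods) or of~\ref{R2}(c)/(d) (for the adjacent case), and then $r$-simplify back. The detailed choice of which local complement to perform is a routine matching of marks and loop statuses against Definition~\ref{mlc}.

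The substantive content is $\Omega_g 3$, $\Omega_g 4$, and $\Omega_g 4'$, and here I would follow the recipes the paper has already prepared. For $\Omega_g 3$, Figure~\ref{linksbis11} is advertised as exhibiting the marked-graph version of the $\Omega_g 3$ move as a composition of $r$-simplifications, a marked local complementation, a marked pivot $G_{cru}^{vw}$ (which by Definition~\ref{mpivot} is the triple local complement $((G_{cru}^v)_{cru}^w)_{cru}^v$, hence already in our allowed list), one of the $\Omega.3$ moves of Figure~\ref{linksbis10}, and three more marked local complementations; I would spell out that composition, tracking the labels/marks at $v$, $w$, $x$ and at every outside vertex using Lemma~\ref{pivot} (for the pivot step) and Definition~\ref{mlc} (for the local-complement steps), and check that the net effect on $mark(G)$ matches $mark$ applied to the output of $\Omega_g 3$. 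For $\Omega_g 4$, note that its labeled-graph description — replace $G$ by $((G^v)^w)^v$ and then swap-and-negate the second coordinates of $v,w$ — is the unmarked pivot; comparing with Lemma~\ref{pivot}, the marked pivot $G_{cru}^{vw}$ does the same thing to non-loop adjacencies and changes the marks of $v,w$ by $c\leftrightarrow(\text{unmarked})$, $r\leftrightarrow cr$, $u\leftrightarrow ur$, which on the $u$-labeled vertices (marks $u$ or $ur$) toggles exactly the loop/$r$ data; after an $r$-simplification at $v$ and $w$ this is precisely the loop-toggle demanded by $\Omega_g 4$ (since for $(0,\alpha)$ vertices, toggling loop status realizes $\alpha\mapsto-\alpha$, and part (c) of Lemma~\ref{pivot} together with a rename of $v,w$ accounts for the coordinate swap). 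For $\Omega_g 4'$, the vertex $v$ has label $(1,\alpha)$, i.e.\ mark $c$; replacing $G$ by $G^v$ and relabeling as prescribed should be matched by $mark(G)_{cru}^v$ followed by $r$-simplifications: Definition~\ref{mlc} sends the mark $c$ at $v$ to $cr$ (so after $r$-simplification the label stays $(1,\cdot)$ but the loop status toggles, realizing $\alpha\mapsto-\alpha$), and sends each neighbor's mark by $u\leftrightarrow cr$, $c\leftrightarrow ur$, (unmarked)$\leftrightarrow r$; on the $u$-labeled neighbors this is, after $r$-simplification, exactly the ``flip the first coordinate, keep the second'' rule, and it also toggles the non-loop adjacencies among neighbors, which is what $G\mapsto G^v$ does.

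The main obstacle I expect is the $\Omega_g 3$ step: unlike the others it is not a single local complement or a single pivot, and the claimed decomposition in Figure~\ref{linksbis11} mixes $r$-simplifications at particular vertices with a local complement, a pivot, a specific $\Omega.3$ configuration from Figure~\ref{linksbis10}, and three further local complements, so verifying that the composite has exactly the right effect on the marks of $v$, $w$, $x$ and on the adjacencies of every outside vertex (recalling that $\Omega_g 3$ makes $x$ adjacent to precisely those outside vertices adjacent to exactly one of $v,w$) requires a careful case analysis driven by Lemma~\ref{pivot}(c)--(d) and the seven clauses of Definition~\ref{mlc}. Everything else is a mechanical check against the tables already in the paper.
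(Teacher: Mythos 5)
Your proposal follows essentially the same route as the paper's proof: it treats the generators $\Omega_{g}1$--$\Omega_{g}4'$ one at a time, matches $\Omega_{g}4'$ with an $r$-simplified marked local complementation, $\Omega_{g}4$ with an $r$-simplified marked pivot, $\Omega_{g}1$ and $\Omega_{g}2$ with $\Omega.1$ and $\Omega.2$ moves after suitable local complementations and $r$-simplifications, and $\Omega_{g}3$ with the composite decomposition of Figure \ref{linksbis11}. Apart from targeting slightly different clauses of Definition \ref{R2} in the $\Omega_{g}2$ case (a detail that works out either way), this is the paper's argument.
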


\begin{proof}
An $\Omega_{g}4^{\prime}$ move corresponds to the $r$-simplification of a
marked local complementation at $v$, and an $\Omega_{g}4$ move corresponds to
the $r$-simplification of a marked pivot with respect to $v$ and $w$.

The first three types of graph-link operations correspond to
marked-graph\ Reidemeister moves. An $\Omega_{g}1$ move corresponds to a
marked-graph $\Omega.1$ move. An $\Omega_{g}2$ move performed on vertices
labeled $(0$, $\pm\alpha)$ corresponds to an instance of Definition \ref{R2}
(b) in the $r $-simplification of $(G_{cru}^{v})_{cru}^{w}$. An $\Omega_{g}2$
move performed on vertices labeled $(1$, $\pm\alpha)$, on the other hand,
corresponds to an instance of Definition \ref{R2} (a) in the $r$%
-simplification of $G_{cru}^{vw}$.

The $\Omega_{g}3$ moves are more complicated. Let $P_{v}$, $P_{w}$,
$P_{vw}\subseteq V(G)-\{v$, $w$, $x\}$ consist of those vertices that are
adjacent to $v$ and not $w$, $w$ and not $v$, and both $v$ and $w$
(respectively). Also, let $H$ be the labeled graph that results from the
$\Omega_{g}3$ move. Let $G^{\prime}$ be the marked graph obtained from
$mark(G)$ by performing a marked local complementation at $x$, a marked pivot
with respect to $v$ and $w$, and then an $r$-simplification. Then $v$, $w$ and
$x$ are all unmarked in $G^{\prime}$, and they induce a subgraph isomorphic to
the third one pictured in Fig. \ref{linksbis10}, with $x$ unlooped. According
to Lemma \ref{pivot}, the neighbors of $v$ in $G^{\prime}$ are the elements of
$P_{vw}\cup P_{w}$, the neighbors of $w$ in $G^{\prime}$ are the elements of
$P_{v}\cup P_{vw}$, and the neighbors of $x$ in $G^{\prime}$ are the elements
of $P_{v}\cup P_{w}$. An $\Omega.3$ move performed on this subgraph of
$G^{\prime}$ results in a graph $G^{\prime\prime}$ that resembles $mark(H)$ in
that no two of $v$, $w$, $x$ are neighbors, $v$ and $w$ are looped, and $x$ is
not looped. However $v$, $w$ and $x$ are all unmarked, while in $mark(H)$ they
are all marked $u$; also the adjacencies among their neighbors do not match
those of $mark(H)$, because of the toggling of adjacencies between vertices
from different elements of $\{P_{v}$, $P_{w}$, $P_{vw}\}$. Both problems are
solved by applying $r$-simplifications and marked local complementations at
$x$, $v$ and $w$. (The marked pivot in the second step exchanges the neighbors
of $v$ and $w$, but of course this is insignificant up to isomorphism.)

This process is pictured in Fig. \ref{linksbis11}. At the top left of the
figure we see $mark(G)$; the vertices $v$, $w$, $x$ are not named in the
figure but they are determined by their neighborhoods. Moving from left to
right along the top row we see the result of applying a marked local
complementation at $x$, \ and then a marked pivot with respect to $v$ and $w$
followed by an $r$-simplification. (The double-headed arrows indicate the
toggling of adjacencies between vertices in different elements of $\{P_{v}$,
$P_{w}$, $P_{vw}\}$.) After applying an $\Omega.3$ move we obtain
$G^{\prime\prime}$, the graph pictured on the right in the second row of the
figure. The graph pictured to the left of $G^{\prime\prime}$ is $((G^{\prime
\prime})_{cru}^{x})_{r}$; the gray boxes indicate the toggling of loops and
non-loop edges within $P_{w}$ and $P_{v}$. The last two graphs are obtained by
marked local complementations first at $v$ and then at $w$, followed by $r$-simplifications.
\end{proof}

Theorem \ref{graphlink} does not completely describe the relationship between
graph-links and marked graphs. On the one hand, some of the marked-graph local
complementation and Reidemeister moves do not occur among the
graph-link\ Reidemeister moves. For instance there is no need for $\Omega
_{g}4^{\prime}$ moves at vertices with labels $(0,\alpha)$, because the
corresponding $\kappa$-transformations would not produce rotating circuits.
This difference may allow some labeled graphs $G$ and $H$ to define
inequivalent graph-links even if $mark(G)$ and $mark(H)$ are equivalent under
Reidemeister moves and $r$-simplification. On the other hand, the fact that
equivalence of graph-links is associated with $r$-simplification raises the
possibility that there may be labeled graphs $G$ and $H$ that define the same
graph-link, but whose associated marked graphs are not Reidemeister equivalent.

\section{Recursion}

We begin developing the recursion of Theorem \ref{recursion} by discussing the
relationships among the bracket polynomials of graphs that differ only in the
loop-mark combination at a single vertex $v$. Denote a graph obtained from $G$
by changing only the loop-mark combination at $v$ by $G(v,x)$, where $x$ tells
us how $v$ has been changed: in $G(v,c,\ell)$ the vertex $v$ is marked $c$ and
looped, in $G(v,ur)$ the vertex $v$ is marked $ur$ and unlooped, in
$G(v,\ell)$ the vertex $v$ is unmarked and looped, in $G(v)$ the vertex $v$ is
unmarked and unlooped, and so on. Observe that the notations $G-v$ and
$G_{cru}^{v}-v$ are unambiguous, because these graphs are not affected if we
change the loop-mark status of $v$.

Split $[G(v)]$ into two separate sums as follows:%
\begin{align*}
S  & =d^{\phi}\cdot\sum_{v\not \in T\subseteq V(G)}A^{n-\left\vert
T\right\vert }B^{\left\vert T\right\vert }d^{\nu(\mathcal{A}(G(v))_{T})}\text{
}\\
& ~\\
\text{and }S^{\prime}  & =d^{\phi}\cdot\sum_{v\in T\subseteq V(G)}%
A^{n-\left\vert T\right\vert }B^{\left\vert T\right\vert }d^{\nu
(\mathcal{A}(G(v))_{T})}.
\end{align*}
We claim that $S^{\prime}=B[G_{cru}^{v}-v]$. If $v\in T\subseteq
V(G(v))=V(G(v)_{cru}^{v})$ then Theorem \ref{samen} tells us that
$\nu(\mathcal{A}(G(v))_{T})=\nu(\mathcal{A}(G(v)_{cru}^{v})_{T})$, and
Definition \ref{mlc} tells us that as $v$ is unlooped and unmarked in $G(v)$,
it is unlooped and marked $u$ in $G(v)_{cru}^{v}$. Definition \ref{adjT} then
tells us that the row and column of $\mathcal{A}(G(v)_{cru}^{v})+\Delta_{T}$
corresponding to $v$ are deleted in obtaining $\mathcal{A}(G(v)_{cru}^{v}%
)_{T}$; it follows that $\mathcal{A}(G(v)_{cru}^{v})_{T}=\mathcal{A}%
(G_{cru}^{v}-v)_{T-\{v\}}$, and consequently $\nu(\mathcal{A}(G_{cru}%
^{v}-v)_{T-\{v\}})=\nu(\mathcal{A}(G(v)_{cru}^{v})_{T})=\nu(\mathcal{A}%
(G(v))_{T})$. Summing over $T$ yields $B[G_{cru}^{v}-v]=S^{\prime}$.

Definitions \ref{adjT} and \ref{bracket} tell us that changing the mark of $v
$ to $r$ has the same effect as toggling between $v\in T$ and $v\notin T$;
hence $[G(v,r)]=A^{-1}BS+B^{-1}AS^{\prime}=A^{-1}BS+A[G_{cru}^{v}-v]$.

Now split $[G(v,u)]$ into two separate sums as follows:%
\begin{align*}
S  & =d^{\phi}\cdot\sum_{v\not \in T\subseteq V(G)}A^{n-\left\vert
T\right\vert }B^{\left\vert T\right\vert }d^{\nu(\mathcal{A}(G(v,u))_{T}%
)}\text{ }\\
& ~\\
\text{and }S^{\prime\prime}  & =d^{\phi}\cdot\sum_{v\in T\subseteq
V(G)}A^{n-\left\vert T\right\vert }B^{\left\vert T\right\vert }d^{\nu
(\mathcal{A}(G(v,u))_{T})}.
\end{align*}
Definition \ref{adjT} tells us that $\mathcal{A}(G(v))_{T}=\mathcal{A}%
(G(v,u))_{T}$ if $v\notin T$, so this $S$ is the same as the sum denoted $S$
in the above analysis of $[G(v)]$. Definition \ref{adjT} also tells us that
$\mathcal{A}(G(v,u))_{T}=\mathcal{A}(G-v)_{T-\{v\}}$ if $v\in T$; summing over
these $T$ we see that $S^{\prime\prime}=B[G-v]$.

Definitions \ref{adjT} and \ref{bracket} imply that $[G(v,ur)]=A^{-1}%
BS+B^{-1}AS^{\prime\prime}=A^{-1}BS+A[G-v]$.

Finally, observe that Definition \ref{adjT} implies $\mathcal{A}%
(G(v,c))_{T}=\mathcal{A}(G(v))_{T}$ if $v\in T$ and $\mathcal{A}%
(G(v,c))_{T}=\mathcal{A}(G(v,u))_{T\cup\{v\}}$ if $v\not \in T$. Consequently
$[G(v,c)]=S^{\prime}+AB^{-1}S^{\prime\prime}=$ $B[G_{cru}^{v}-v]+A[G-v]$.
Changing the mark of $v$ to $cr$ reverses the coefficients:
$[G(v,cr)]=A[G_{cru}^{v}-v]+B[G-v]$.

In sum, we have the following equalities:%
\begin{align}
\lbrack G(v)]  & =S+B[G_{cru}^{v}-v]\tag{7.1}\label{equalities}\\
\lbrack G(v,r)]  & =A^{-1}BS+A[G_{cru}^{v}-v]\nonumber\\
\lbrack G(v,c)]  & =A[G-v]+B[G_{cru}^{v}-v]\nonumber\\
\lbrack G(v,cr)]  & =B[G-v]+A[G_{cru}^{v}-v]\nonumber\\
\lbrack G(v,u)]  & =S+B[G-v]\nonumber\\
\lbrack G(v,ur)]  & =A^{-1}BS+A[G-v]\nonumber
\end{align}
Recall that Proposition \ref{rloop} tells us that $[G(v,\ell)]=[G(v,r)]$,
$[G(v,ur,\ell)]=[G(v,u)]$ and so on. Note also that in case $D$ is a link
diagram with $G=\mathcal{L}(D,C)$, the six equalities of (\ref{equalities})
correspond to the six different ways the Euler system $C$ might be related to
the $A$ and $B$ smoothings at the crossing of $D$ corresponding to $v$; see
Fig. \ref{linksbis5}.

The proof of Theorem \ref{recursion} is now very simple. Parts (a) -- (c)
follow from Definition \ref{bracket}, parts (e) and (f) follow from Theorem
\ref{brac2}, and part (d) follows from Proposition \ref{rloop} and the
formulas for $[G(v,c)]$ and $[G(v,cr)]$ in (\ref{equalities}). To perform a
computation, first use (a) -- (d) to remove free loops, isolated vertices and
vertices marked $c$ or $cr$. Suppose a nonempty graph has no vertex marked $c$
or $cr$, and no isolated vertex. If $v$ has a neighbor marked $u$ or $ur$ then
in $G_{cru}^{v}$ every such neighbor is marked $c$ or $cr$, and can be removed
with (d). If there is no vertex with a neighbor marked $u$ or $ur$ then there
are two neighbors $v$ and $w$ each of which is either unmarked or marked $r$;
$w$ is marked $u$ or $ur$ in $G_{cru}^{w}$, so (e) may be applied to $v$ in
$G_{cru}^{w}$.

If we compare Theorem \ref{recursion} to the recursions discussed in \cite{T1,
T3, TZ}, we see that two recursive steps have been removed in favor of marked
local complementations. The marked local complementations are preferable
because they do not involve replacing one graph with two or three graphs, but
the old recursive steps are still valid.

\begin{proposition}
\label{oldrec}If $v\in V(G)$ is looped and unmarked then%
\[
\lbrack G]=A^{-1}B[G-\{v,v\}]+(A-A^{-1}B^{2})[G_{cru}^{v}-v]\text{, }%
\]
where $G-\{v,v\}$ is obtained from $G$ by removing the loop at $v$. Also, if
$v$ and $w$ are two unlooped, unmarked neighbors in $G$ then
\[
\lbrack G]=A^{2}[G_{cru}^{vw}-v-w]+AB[(G_{cru}^{w})_{cru}^{v}-v-w]+B[G_{cru}%
^{v}-v].
\]

\end{proposition}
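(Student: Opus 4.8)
The plan is to derive both formulas directly from the six identities (\ref{equalities}), together with Proposition \ref{rloop}, the term-by-term invariance of Theorem \ref{samen}, and the description of the marked pivot in Lemma \ref{pivot}; no return to Definition \ref{bracket} from first principles is needed.

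For the first formula, I would note that the hypothesis on $v$ means $G = G(v,\ell)$, so Proposition \ref{rloop} gives $[G] = [G(v,r)]$. The first two lines of (\ref{equalities}) read $[G(v)] = S + B[G_{cru}^{v}-v]$ and $[G(v,r)] = A^{-1}BS + A[G_{cru}^{v}-v]$ with a common auxiliary sum $S$; solving the first for $S$ and substituting into the second gives $[G(v,r)] = A^{-1}B[G(v)] + (A - A^{-1}B^{2})[G_{cru}^{v}-v]$. Since $v$ is unmarked in $G$, the graph $G(v)$ is exactly $G$ with the loop at $v$ deleted, i.e.\ $G(v) = G-\{v,v\}$, and the first displayed identity of the proposition follows immediately.

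For the second formula I would first apply (\ref{equalities}) at the (unmarked, unlooped) vertex $v$, writing $[G] = S + B[G_{cru}^{v}-v]$ with $S = d^{\phi}\sum_{v\notin T}A^{n-|T|}B^{|T|}d^{\nu(\mathcal{A}(G)_{T})}$, and then — since $w$ too is unmarked and unlooped — split $S = S_{1} + S_{2}$ according to whether $w\in T$, with $S_{1}$ the sum over $v,w\notin T$ and $S_{2}$ the sum over $v\notin T$, $w\in T$. The assembly $[G]=S_{1}+S_{2}+B[G_{cru}^{v}-v]$ then reduces everything to proving $S_{1} = A^{2}[G_{cru}^{vw}-v-w]$ and $S_{2} = AB[(G_{cru}^{w})_{cru}^{v}-v-w]$. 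For $S_{1}$: three applications of Theorem \ref{samen} (at $v$, then $w$, then $v$) give $\nu(\mathcal{A}(G)_{T}) = \nu(\mathcal{A}(G_{cru}^{vw})_{T})$ for every $T$, and by Lemma \ref{pivot} the pivot $G_{cru}^{vw}$ has $v$ and $w$ marked $c$, keeps them unlooped, and leaves the marks and loop statuses of all other vertices unchanged; hence when $v,w\notin T$, Definition \ref{adjT} deletes the rows and columns of $v$ and $w$ (each marked $c$ with vanishing diagonal entry) and the matrix it builds is literally $\mathcal{A}(G_{cru}^{vw}-v-w)_{T}$, so extracting $A^{n-|T|} = A^{2}\cdot A^{(n-2)-|T|}$ yields $S_{1} = A^{2}[G_{cru}^{vw}-v-w]$. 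For $S_{2}$: when $w\in T$, Theorem \ref{samen} at $w$ gives $\nu(\mathcal{A}(G)_{T}) = \nu(\mathcal{A}(G_{cru}^{w})_{T})$, and in $G_{cru}^{w}$ the vertex $w$ is marked $u$ with diagonal entry $1$, so Definition \ref{adjT} deletes it and produces $\mathcal{A}(G_{cru}^{w}-w)_{T\setminus\{w\}}$; this identifies $S_{2}$ with $B$ times the part of $[G_{1}]$ indexed by subsets avoiding $v$, where $G_{1} := G_{cru}^{w}-w$ is a graph in which $v$ now carries the mark $r$ (Definition \ref{mlc}(4)). That part, by the same reasoning that produced the $[G(v,r)]$ line of (\ref{equalities}) (an $r$ on $v$ trades for membership of $v$ in the subset, followed by deletion of $v$), equals $A[(G_{1})_{cru}^{v}-v]$; and finally $(G_{1})_{cru}^{v}-v = (G_{cru}^{w})_{cru}^{v}-v-w$, because a marked local complement at $v$ followed by deletion of $v$ is insensitive both to the mark $v$ carries beforehand and to whether the already-deleted vertex $w$ survives among the neighbors of $v$. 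Hence $S_{2} = AB[(G_{cru}^{w})_{cru}^{v}-v-w]$.

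The arithmetic here is trivial; the work is bookkeeping, and the step I expect to need the most care is verifying, in each of the two reductions for the second formula, that the matrix Definition \ref{adjT} assembles for the smaller graph is the very same matrix (not merely one of equal nullity) that appears after the indicated row and column deletions. This rests on the observations already used in the proof of Theorem \ref{samen}: deleting a vertex disturbs neither the diagonal entries nor the marks of the surviving vertices, a single marked local complement never alters a loop status, and after a full pivot the marks of all vertices other than $v$ and $w$ return to their original values. As a sanity check, for $G$ a single edge joining unmarked, unlooped $v$ and $w$ one has $[G] = A^{2} + 2AB + B^{2}d$, while the three right-hand terms evaluate to $A^{2}\cdot 1$, $AB\cdot 1$, and $B(A + Bd)$.
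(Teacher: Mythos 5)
Your proof is correct and follows essentially the same route as the paper's: the first identity by eliminating $S$ between the $[G(v)]$ and $[G(v,r)]$ lines of (\ref{equalities}) via Proposition \ref{rloop}, and the second by splitting the state sum according to membership of $v$ and $w$ in $T$ and identifying the three pieces using Theorem \ref{samen}, Lemma \ref{pivot} and Definition \ref{adjT}. The only (cosmetic) difference is in the $v\notin T$, $w\in T$ piece, where the paper passes directly to $(G_{cru}^{w})_{cru}^{v}$, in which both $v$ (marked $ur$) and $w$ (marked $cr$) are deleted outright, whereas you delete $w$ first and then dispose of the $r$-mark on $v$ by the subset-toggling argument; both land on $AB[(G_{cru}^{w})_{cru}^{v}-v-w]$.
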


\begin{proof}
The first part follows immediately from the formulas (\ref{equalities}).

Suppose $v\neq w\in V(G)$ are adjacent, unlooped, and unmarked. Let
\begin{gather*}
S_{00}=\sum_{\substack{T\subseteq V(G) \\v\notin T,w\notin T}}A^{n-\left\vert
T\right\vert }B^{\left\vert T\right\vert }d^{\nu(\mathcal{A}(G)_{T})}\text{,
}S_{01}=\sum_{\substack{T\subseteq V(G) \\v\notin T,w\in T }}A^{n-\left\vert
T\right\vert }B^{\left\vert T\right\vert }d^{\nu(\mathcal{A}(G)_{T})}\\
~\\
\text{and }S_{1}=\sum_{\substack{T\subseteq V(G) \\v\in T}}A^{n-\left\vert
T\right\vert }B^{\left\vert T\right\vert }d^{\nu(\mathcal{A}(G)_{T})}\text{ .}%
\end{gather*}
As discussed in the second paragraph of this section, $S_{1}=B[G_{cru}^{v}-v]
$.

Suppose $v,w\notin$ $T\subseteq V(G)$. Theorem \ref{samen} tells us
$\nu(\mathcal{A}(G)_{T})=\nu(\mathcal{A}(G_{cru}^{vw})_{T})$. As $v$ and $w$
are both unmarked in $G$, Lemma \ref{pivot} and Definition \ref{mpivot} tell
us that $v$ and $w$ are both marked $c$ in $G_{cru}^{vw}$. As they are both
unlooped and not in $T$, the definition of $\mathcal{A}(G_{cru}^{vw})_{T}$
involves deleting the rows and columns of $\mathcal{A}(G_{cru}^{vw}%
)+\Delta_{T}$ corresponding to both $v$ and $w$. Consequently $\mathcal{A}%
(G_{cru}^{vw})_{T}=\mathcal{A}(G_{cru}^{vw}-v-w)_{T}$. Summing over $T$, we
see that
\[
S_{00}=A^{2}[G_{cru}^{vw}-v-w].
\]

Suppose $v\notin T\subseteq V(G)$ and $w\in T$. Theorem \ref{samen} tells us
that $\nu(\mathcal{A}(G)_{T})=\nu(\mathcal{A}((G_{cru}^{w})_{cru}^{v})_{T}) $.
In $G$, $v$ and $w$ are both unlooped and unmarked; in $G_{cru}^{w}$, $w$ is
unlooped and marked $u$ and $v$ is unlooped and marked $r$; and in
$(G_{cru}^{w})_{cru}^{v}$, $v$ is unlooped and marked $ur$ and $w$ is unlooped
and marked $cr$. As $v\notin T$ and $w\in T$, the definition of $\mathcal{A}%
((G_{cru}^{w})_{cru}^{v})_{T}$ involves deleting the rows and columns of
$\mathcal{A}((G_{cru}^{w})_{cru}^{v})+\Delta_{T}$ corresponding to $v$ and
$w$; summing over $T$ yields%
\[
S_{01}=AB[(G_{cru}^{w})_{cru}^{v}-v-w].
\]

$[G]=S_{1}+S_{00}+S_{01}$, so the second equality of the proposition follows.
\end{proof}

As noted in \cite{T1} and \cite{TZ}, for link diagrams the formulas of
Proposition \ref{oldrec} correspond to well-known properties of the Kauffman
bracket. The first corresponds to the Kauffman bracket's switching formula
(denoted $A\chi-A^{-1}\bar{\chi}=(A^{2}-A^{-2})\asymp$ in \cite{Kd}), and the
second corresponds to a double use of the basic recursion of the Kauffman
bracket, $[D]=A^{2}[D_{AA}]+AB[D_{AB}]+B[D_{B}]$. The hypothesis
\textquotedblleft no neighbor of $v$ is marked\textquotedblright\ appeared
when the formulas of Proposition \ref{oldrec} were used in \cite{T1} and
\cite{T3}, but this hypothesis was only necessary because we used ordinary
(unmarked) local complementation there. The effect of the hypothesis is to
restrict attention to situations in which the unmarked local complements are
the same as $r$-simplifications of marked local complements.

\section{Vertex weights}

In \cite{T3} we discuss several advantages of extending the marked-graph
bracket to graphs given with \textit{vertex weights}, i.e. functions $\alpha$
and $\beta$ mapping $V(G)$ into some commutative ring $R$. The weighted form
of the bracket is defined by using the weights in place of $A$ and $B$ in
Definition \ref{bracket}:%
\[
\lbrack G]=d^{\phi}\cdot\sum_{T\subseteq V(G)}(%
{\displaystyle\prod\limits_{v\notin T}}
\alpha(v))(%
{\displaystyle\prod\limits_{t\in T}}
\beta(t)))d^{\nu(\mathcal{A}(G)_{T})}.
\]
Theorem \ref{samen} tells us that if we extend marked local complementation to
weighted graphs in the obvious way (i.e. local complementation does not affect
vertex weights), then the weighted, marked-graph bracket polynomial is
invariant under marked local complementations.

The simplest result of \cite{T3} is that reversing the $\alpha$ and $\beta$
weights of a vertex has the same effect as toggling its loop status. (The
corresponding $A-B$ duality is apparent in the formulas of Section 7.) This
observation may seem trivial but it is useful in simplifying recursive
calculations that involve the first equality of Proposition \ref{oldrec}:
rather than replace a graph with two graphs each time we want to remove a
loop, we simply interchange $\alpha(v)$ and $\beta(v)$ at each looped vertex.
Similarly, if the mark of a vertex $v$ includes $r$ then $[G]$ is unchanged if
we remove the $r$ and interchange $\alpha(v)$ and $\beta(v)$.

It is a simple matter to modify Theorem \ref{recursion} to incorporate
weights: just replace each occurrence of $A$ with $\alpha(v)$, and each
occurrence of $B$ with $\beta(v)$.

There are also analogues of series-parallel reductions, involving twin
vertices. (Recall from \cite{T3} that twin vertices occur naturally in the
looped interlacement graphs of link diagrams: when two strands of a link are
twisted around each other repeatedly, the resulting classical crossings
correspond to twin vertices in $\mathcal{L}(D,C)$.) Some of these twin
reductions are very much like series-parallel reductions; they replace several
vertices with one re-weighted vertex in a single graph. Others are not
completely analogous to series-parallel reductions as the reduced forms
involve two different graphs. All are of some value in computation because
they result in smaller graphs than part (d) of Theorem \ref{recursion}. There
are several different cases; here are four.

\begin{proposition}
Let $v\neq w\in V(G)$ be nonadjacent, unlooped twin vertices. (That is, they
have the same neighbors outside $\{v,w\}$.)

(a) Suppose that $v$ and $w$ are both marked $u$. Then $[G]=[(G-w)^{\prime}]
$, where $(G-w)^{\prime}$ is obtained from $G-w$ by changing the weights of
$v$ to $\alpha^{\prime}(v)=\alpha(v)\alpha(w)d+\alpha(v)\beta(w)+$
$\beta(v)\alpha(w)$ and $\beta^{\prime}(v)=\beta(v)\beta(w)$.

(b) Suppose that $v$ is unmarked and $w$ is marked $u$. Then
$[G]=[(G-w)^{\prime}]\,$, where $(G-w)^{\prime}$ is obtained from $G-w$ by
changing the weights of $v$ to $\alpha^{\prime}(v)=\alpha(v)\alpha
(w)d+\alpha(v)\beta(w)+$ $\beta(v)\alpha(w)$ and $\beta^{\prime}%
(v)=\beta(v)\beta(w)$.

(c) Suppose that $v$ and $w$ are both unmarked. Then $[G]=[(G-w)^{\prime}]$,
where $(G-w)^{\prime}$ is obtained from $G-w$ by giving $v$ a mark of $u$ and
changing the weights of $v$ to $\alpha^{\prime}(v)=\alpha(v)\alpha
(w)d+\alpha(v)\beta(w)+$ $\beta(v)\alpha(w)$ and $\beta^{\prime}%
(v)=\beta(v)\beta(w)$.

(d) Suppose that $v$ is marked $c$ and $w$ is marked $u$. Then
$[G]=[(G-w)^{\prime}]+\alpha(v)\beta(w)[G-v-w]$, where $(G-w)^{\prime}$ is
obtained from $G-w$ by unmarking $v$ and changing the weights of $v$ to
$\alpha^{\prime}(v)=\alpha(v)\alpha(w)+$ $\beta(v)\alpha(w)$ and
$\beta^{\prime}(v)=\beta(v)\beta(w)$.
\end{proposition}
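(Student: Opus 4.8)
The plan is to compute $[G]$ directly from Definition \ref{bracket} (in its weighted form) and compare it with the right-hand sides, organizing the sum over $T\subseteq V(G)$ according to $T\cap\{v,w\}$. Order $V(G)$ with $v,w$ first and set $N=N_G(v)\setminus\{v,w\}=N_G(w)\setminus\{v,w\}$. Because $v,w$ are nonadjacent and unlooped, the rows and columns of $\mathcal{A}(G)$ indexed by $v$ and by $w$ both equal the indicator vector of $N$, with $0$ in the $(v,v)$, $(w,w)$, $(v,w)$ and $(w,v)$ positions, so
\[
\mathcal{A}(G)=\begin{pmatrix}0&0&\mathbf{1}_N^{t}\\0&0&\mathbf{1}_N^{t}\\\mathbf{1}_N&\mathbf{1}_N&M\end{pmatrix},\qquad M=\mathcal{A}(G-v-w).
\]

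First I would record two structural facts. Fix $T_0\subseteq V(G)\setminus\{v,w\}$ and $X\subseteq\{v,w\}$ and put $T=T_0\cup X$. (i) For $x\notin\{v,w\}$ the diagonal entry of $\mathcal{A}(G)+\Delta_T$ at $x$, hence whether Definition \ref{adjT} removes the $x$-row and $x$-column, depends only on the loop status and mark of $x$ and on whether $x\in T_0$; it is independent of $X$, and it is the very same condition that governs removal of $x$ in forming $\mathcal{A}(G-w)_{T'}$, $\mathcal{A}((G-w)')_{T'}$ (for $T'=T_0$ or $T_0\cup\{v\}$) and $\mathcal{A}(G-v-w)_{T_0}$. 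Performing these common removals turns $M+\Delta_{T_0}$ into a matrix $M_0=\mathcal{A}(G-v-w)_{T_0}$ and turns $\mathbf{1}_N$ into a vector $r$. (ii) Whether $v$ (resp.\ $w$) is itself removed, and its diagonal entry if it survives, are read off from its mark together with $[v\in X]$ (resp.\ $[w\in X]$) via Definition \ref{adjT}, the off-diagonal part of a surviving $v$- or $w$-row being always $r$. Hence, after the common removals, $\mathcal{A}(G)_T$ becomes $M_0$, or $\bigl(\begin{smallmatrix}a&r^{t}\\r&M_0\end{smallmatrix}\bigr)$ with $a\in\{0,1\}$, or $\bigl(\begin{smallmatrix}a&0&r^{t}\\0&b&r^{t}\\r&r&M_0\end{smallmatrix}\bigr)$ with $a,b\in\{0,1\}$, the case depending only on $X$ and the marks of $v,w$.

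The next step is a short $GF(2)$ computation. With $n_0=\nu(M_0)$, $n_1=\nu\bigl(\begin{smallmatrix}0&r^{t}\\r&M_0\end{smallmatrix}\bigr)$ and $n_2=\nu\bigl(\begin{smallmatrix}1&r^{t}\\r&M_0\end{smallmatrix}\bigr)$, elementary row and column operations give $\nu\bigl(\begin{smallmatrix}0&0&r^{t}\\0&0&r^{t}\\r&r&M_0\end{smallmatrix}\bigr)=n_1+1$ (the first two rows, and the first two columns, coincide), $\nu\bigl(\begin{smallmatrix}1&0&r^{t}\\0&0&r^{t}\\r&r&M_0\end{smallmatrix}\bigr)=n_1$, and $\nu\bigl(\begin{smallmatrix}1&0&r^{t}\\0&1&r^{t}\\r&r&M_0\end{smallmatrix}\bigr)=n_0$ (the leading $2\times2$ identity block clears both copies of $r$, which then cancel). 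Combining this with (ii) I would tabulate $\nu(\mathcal{A}(G)_{T_0\cup X})$ for $X=\emptyset,\{v\},\{w\},\{v,w\}$ in all four cases: the values are $n_1+1,\,n_1,\,n_1,\,n_0$ in case (a), the same in case (c), $n_1+1,\,n_1,\,n_1,\,n_2$ in case (b), and $n_1,\,n_1,\,n_0,\,n_2$ in case (d).

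Finally I would do the weight bookkeeping. Since $(G-w)'$, $G-v-w$ and $G$ agree outside $\{v,w\}$ and have the same number of free loops, each claimed identity amounts, after extracting the common factor $d^{\phi}$ and the common $\alpha/\beta$-weights of the vertices outside $\{v,w\}$, to a term-by-term (in $T_0$) identity of ``inner'' polynomials, each of which depends on $T_0$ only through $M_0$ and $r$. On the left the inner polynomial is $\sum_X(\text{weight of }X)\,d^{\nu(\mathcal{A}(G)_{T_0\cup X})}$, where $X=\emptyset,\{v\},\{w\},\{v,w\}$ carries weight $\alpha(v)\alpha(w),\ \beta(v)\alpha(w),\ \alpha(v)\beta(w),\ \beta(v)\beta(w)$; using the tabulated nullities it collapses to $d^{n_1}\alpha'(v)+d^{n_0}\beta'(v)$ in cases (a) and (c), to $d^{n_1}\alpha'(v)+d^{n_2}\beta'(v)$ in case (b), and to $d^{n_1}\alpha'(v)+d^{n_0}\alpha(v)\beta(w)+d^{n_2}\beta'(v)$ in case (d), with $\alpha',\beta'$ exactly the weights named in the statement. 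On the right, the inner polynomial of $[(G-w)']$ is $\alpha'(v)d^{n_1}+\beta'(v)d^{n_0}$ when $v$ is marked $u$ in $(G-w)'$ (cases (a) and (c)) and $\alpha'(v)d^{n_1}+\beta'(v)d^{n_2}$ when $v$ is unmarked there (cases (b) and (d)), while that of $[G-v-w]$ is $d^{n_0}$; adding $\alpha(v)\beta(w)\,d^{n_0}$ in case (d) reconciles the two sides, and all four identities follow. The one genuinely delicate point is the sub-case $X=\{v,w\}$: the two copies of $r$ cancel exactly when both $v$ and $w$ have been removed, and it is this that forces case (c) to reintroduce the mark $u$ on $v$ and case (d) to carry the extra term $\alpha(v)\beta(w)[G-v-w]$. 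Apart from that the argument is just the case-by-case verification outlined above.
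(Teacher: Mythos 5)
Your proposal is correct and takes essentially the same route as the paper: partition the state sum according to $T\cap\{v,w\}$, evaluate the four relevant $GF(2)$ block-matrix nullities, and match the weighted contributions against those of $(G-w)^{\prime}$ (plus the extra term $\alpha(v)\beta(w)[G-v-w]$ in case (d)). The nullity identities you tabulate as $n_{0}$, $n_{1}$, $n_{2}$ are exactly the displayed matrix equalities in the paper's proof, so there is nothing substantively different to compare.
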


\begin{proof}
(a) Suppose $V(G)=\{v_{1},...,v_{n}\}$ with $v=v_{1}$ and $w=v_{2}$. If
$T\subseteq\{v_{3},...,v_{n}\}\,$\ then the equality
\[
\nu%
\begin{pmatrix}
0 & 0 & \mathbf{1} & \mathbf{0}\\
0 & 0 & \mathbf{1} & \mathbf{0}\\
\mathbf{1} & \mathbf{1} & M_{11} & M_{12}\\
\mathbf{0} & \mathbf{0} & M_{21} & M_{22}%
\end{pmatrix}
-1=\nu%
\begin{pmatrix}
0 & \mathbf{1} & \mathbf{0}\\
\mathbf{1} & M_{11} & M_{12}\\
\mathbf{0} & M_{21} & M_{22}%
\end{pmatrix}
\]
tells us that $\nu(\mathcal{A}(G)_{T})-1=\nu(\mathcal{A}((G-w)^{\prime}%
)_{T})=\nu(\mathcal{A}(G)_{T\cup\{w\}})=\nu(\mathcal{A}(G)_{T\cup\{v\}}) $. It
follows that the contributions of $T$, $T\cup\{w\}$ and $T\cup\{v\}$ to $[G]$
sum to the contribution of $T$ to $[(G-w)^{\prime}]$. As
\[
\mathcal{A}(G)_{T\cup\{v,w\}}=%
\begin{pmatrix}
M_{11} & M_{12}\\
M_{21} & M_{22}%
\end{pmatrix}
=\mathcal{A}((G-w)^{\prime})_{T\cup\{v\}},
\]
the contribution of $T\cup\{v,w\}$ to $[G]$ coincides with the contribution of
$T\cup\{v\}$ to $[(G-w)^{\prime}]$.

(b) If $T\subseteq\{v_{3},...,v_{n}\}\,$\ then the equality
\[
\nu%
\begin{pmatrix}
0 & 0 & \mathbf{1} & \mathbf{0}\\
0 & 0 & \mathbf{1} & \mathbf{0}\\
\mathbf{1} & \mathbf{1} & M_{11} & M_{12}\\
\mathbf{0} & \mathbf{0} & M_{21} & M_{22}%
\end{pmatrix}
-1=\nu%
\begin{pmatrix}
0 & \mathbf{1} & \mathbf{0}\\
\mathbf{1} & M_{11} & M_{12}\\
\mathbf{0} & M_{21} & M_{22}%
\end{pmatrix}
=\nu%
\begin{pmatrix}
1 & 0 & \mathbf{1} & \mathbf{0}\\
0 & 0 & \mathbf{1} & \mathbf{0}\\
\mathbf{1} & \mathbf{1} & M_{11} & M_{12}\\
\mathbf{0} & \mathbf{0} & M_{21} & M_{22}%
\end{pmatrix}
\]
implies$\ \nu(\mathcal{A}(G)_{T})-1=\nu(\mathcal{A}((G-w)^{\prime})_{T}%
)=\nu(\mathcal{A}(G)_{T\cup\{w\}})=\nu(\mathcal{A}(G)_{T\cup\{v\}}) $. Also,%
\[
\mathcal{A}(G)_{T\cup\{v,w\}}=%
\begin{pmatrix}
1 & \mathbf{1} & \mathbf{0}\\
\mathbf{1} & M_{11} & M_{12}\\
\mathbf{0} & M_{21} & M_{22}%
\end{pmatrix}
=\mathcal{A}((G-w)^{\prime})_{T\cup\{v\}}.
\]

(c) The first equalities displayed for (b) still tell us that $\nu
(\mathcal{A}(G)_{T})-1=\nu(\mathcal{A}((G-w)^{\prime})_{T})=\nu(\mathcal{A}%
(G)_{T\cup\{w\}})=\nu(\mathcal{A}(G)_{T\cup\{v\}})$. In this case
$\mathcal{A}(G)_{T\cup\{v,w\}}\not =\mathcal{A(}(G-w)^{\prime})_{T\cup\{v\}} $
but $\nu(\mathcal{A}(G)_{T\cup\{v,w\}})=\nu(\mathcal{A(}(G-w)^{\prime}%
)_{T\cup\{v\}})$ nevertheless, because
\[
\nu%
\begin{pmatrix}
1 & 0 & \mathbf{1} & \mathbf{0}\\
0 & 1 & \mathbf{1} & \mathbf{0}\\
\mathbf{1} & \mathbf{1} & M_{11} & M_{12}\\
\mathbf{0} & \mathbf{0} & M_{21} & M_{22}%
\end{pmatrix}
=\nu%
\begin{pmatrix}
M_{11} & M_{12}\\
M_{21} & M_{22}%
\end{pmatrix}
.
\]

(d) This follows from the equality
\[
\nu%
\begin{pmatrix}
0 & \mathbf{1} & \mathbf{0}\\
\mathbf{1} & M_{11} & M_{12}\\
\mathbf{0} & M_{21} & M_{22}%
\end{pmatrix}
=\nu%
\begin{pmatrix}
1 & 0 & \mathbf{1} & \mathbf{0}\\
0 & 0 & \mathbf{1} & \mathbf{0}\\
\mathbf{1} & \mathbf{1} & M_{11} & M_{12}\\
\mathbf{0} & \mathbf{0} & M_{21} & M_{22}%
\end{pmatrix}
.
\]

\end{proof}

The similarities among cases (a), (b) and (c) are not coincidental: they
reflect the fact that the corresponding graphs are transformed into each other
by $r$-simplifications and marked local complementations at $v$ and $w$.

Conway \cite{C} introduced a valuable way to analyze a link diagram in terms
of smaller building blocks called \textit{tangles}. (According to Quach
Hongler and Weber \cite{QW} this notion dates back much further, but was
largely forgotten until Conway rediscovered it.) In \cite{T3} we showed that
tangles in a link diagram $D$ give rise to descriptions of $\mathcal{L}(D,C)$
as a \textit{composition} of graphs. This important construction is due to
Cunningham \cite{Cu}.

\begin{definition}
\label{comp} Let $F$ and $H$ be doubly marked, weighted graphs whose
intersection consists of a single unlooped, unmarked vertex $a$ with weights
$\alpha(a)=A$ and $\beta(a)=B$. The \emph{composition} $G=F\ast H$ is
constructed as follows.

(a) The elements of $V(G)=(V(F)\cup V(H))-\{a\}$ inherit their loops, marks
and weights from $F$ and $H$.

(b) $E(G)=E(F-a)\cup E(H-a)\cup\{\{v,w\}|\{v,a\}\in E(F)$ and $\{a,w\}\in
E(H)\}$.

(c) The number of free loops of $G$ is $\phi(G)=\phi(F)+\phi(H)$.
\end{definition}

The restrictions on $a$ are intended merely to ensure that no information is
lost when $a$ is removed in the construction; they have no effect on $F\ast H
$.

\begin{theorem}
\label{pjoin} Let $F$ be a marked, weighted graph with an unlooped, unmarked
vertex $a$ that has $\alpha(a)=A$ and $\beta(a)=B$. Then the ring $R$ has
elements $\alpha^{\prime}(a)$, $\beta^{\prime}(a)$ and $\gamma$ that depend
only on $F$ and $a$, and have the following \textquotedblleft
universal\textquotedblright\ property: every composition $F\ast H$ has
\[
\lbrack F\ast H]=[H^{\prime}]+\gamma\lbrack H-a],
\]
where $H^{\prime}$ is obtained from $H$ by changing the weights of $a$ from
$A$ and $B$ to $\alpha^{\prime}(a)$ and $\beta^{\prime}(a)$.
\end{theorem}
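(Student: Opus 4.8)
The plan is to compute $[F\ast H]$ directly from Definition \ref{bracket}, exploiting the block structure of $\mathcal{A}(F\ast H)$. Write $V(F\ast H)=(V(F)-\{a\})\sqcup(V(H)-\{a\})$, so each $T\subseteq V(F\ast H)$ splits uniquely as $T=T_F\sqcup T_H$. Since $F$ and $H$ meet only in $a$ and the composition creates no loops, the loop/mark status of each vertex $v\neq a$ is inherited unchanged from $F$ or $H$; hence the diagonal shift $\Delta_T$ and the row/column deletions of Definition \ref{adjT} break into an $F$-part depending only on $T_F$ and an $H$-part depending only on $T_H$. The only new edges of $F\ast H$ form a complete bipartite graph between the neighbours of $a$ in $F$ and those in $H$, so the surviving submatrix is
\[
\mathcal{A}(F\ast H)_T=\begin{pmatrix}M_F(T_F)&f\,h^{\top}\\ h\,f^{\top}&M_H(T_H)\end{pmatrix},
\]
where $M_F(T_F)$ and $M_H(T_H)$ are built from $F-a$ and $H-a$ alone and $f$, $h$ are the indicator vectors of the surviving neighbours of $a$ on the two sides; in particular the off-diagonal block has rank at most $1$. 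For $\delta\in\{0,1\}$ write $\nu_H^{(\delta)}$ for the binary nullity of the matrix obtained from $M_H(T_H)$ by adjoining a new row and column that agree with $h$ off the diagonal and have diagonal entry $\delta$. These two numbers, together with $\nu(M_H(T_H))$, are exactly the exponents of $d$ occurring in $[H']$ (split according to whether the re-weighted, unlooped, unmarked vertex $a$ lies in $T$) and in $[H-a]$, and the $d^{\phi(F)}$ coming from Definition \ref{comp}(c) will be absorbed into the universal coefficients.

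The heart of the argument is a $GF(2)$ nullity lemma, proved by congruence manipulations in the style of the proof of Theorem \ref{samen}. After a change of basis on the $F$-block making $f=e_1$, the matrix above becomes a single ``central'' vertex joined to a copy of $M_H(T_H)$ through $h$ and to the rest of the $F$-block through a vector $b$. The claim is that $\nu(\mathcal{A}(F\ast H)_T)-\nu(M_H(T_H))$ depends on the $H$-side only through which of three cases the pair $(M_H(T_H),h)$ lies in: (1)~$h=M_Hx$ with $x^{\top}M_Hx=0$; (2)~$h=M_Hx$ with $x^{\top}M_Hx=1$; (3)~$h$ not in the column space of $M_H$. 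In cases (1) and (2) one absorbs the $M_H$-block into the central row and column --- this only adds $x^{\top}M_Hx$ to the central diagonal entry --- leaving a direct sum; in case (3) a short row reduction using a vector $c\in\ker M_H$ with $c^{\top}h=1$ gives $\nu=\nu(M_H(T_H))+\nu(M_F')-1$, where $M_F'$ is the $F$-block with the central row and column deleted, independently of all remaining $F$-side data. Moreover the three cases are detected by the triple $(\nu(M_H(T_H)),\nu_H^{(0)},\nu_H^{(1)})$, which equals $(\nu,\nu+1,\nu)$, $(\nu,\nu,\nu+1)$, or $(\nu,\nu-1,\nu-1)$ respectively; in particular case (3) forces $\nu(M_H(T_H))\geq 1$.

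It then remains to assemble the bracket. Grouping the sum defining $[F\ast H]$ over $T_F$ first, for each fixed $T_H$, produces an $F$-dependent coefficient of $d$ raised to $\nu(M_H(T_H))$ plus a shift governed by the case of $T_H$; partitioning the subsets $T_F$ according to the analogous three-way dichotomy of $(M_F',b)$ --- ``$b$ in the column space of $M_F'$'', with a further binary split recording which of $\nu_F^{(0)},\nu_F^{(1)}$ is larger, versus ``$b$ not in the column space'' --- and matching coefficients over each of the three possibilities for $T_H$ yields a $3\times3$ linear system for $\alpha'(a),\beta'(a),\gamma$. Its solution has the shape $\alpha'(a)=d\cdot(\text{weighted sum of }d^{\nu(M_F'(T_F))-1}\text{ over one class of }T_F)$, $\beta'(a)=d\cdot(\text{same over a second class})$, $\gamma=(\text{same over the third class})$, each prefixed by $d^{\phi(F)}$; these are manifestly functions of $F$ and $a$ only, and they lie in $R$ because the only $T_F$ contributing a negative power of $d$ fall in the two classes that are multiplied back by $d$, while the class defining $\gamma$ forces $M_F'(T_F)$ to be singular, so $\nu(M_F'(T_F))-1\geq 0$ there. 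Combining this with the identification of the exponents $\nu_H^{(0)},\nu_H^{(1)},\nu(M_H(T_H))$ above gives $[F\ast H]=[H']+\gamma[H-a]$, term by term in $T_H$.

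I expect the nullity lemma --- especially the case-(3) row reduction and the verification that the three $F$-side classes are genuinely complementary and produce the clean coefficient formulas --- to be the only real work; everything else is exponent-of-$d$ bookkeeping. As a sanity check I would first run the degenerate composition in which $F$ is the single edge joining $a$ to an unlooped, unmarked vertex: here $F\ast H\cong H$, so the formula must collapse to $\alpha'(a)=A$, $\beta'(a)=B$, $\gamma=0$, and this already pins down the correct labelling of the three classes.
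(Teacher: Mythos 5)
Your proposal is essentially correct, but it takes a genuinely different route from the paper's. The paper proves Theorem \ref{pjoin} by induction on the number of steps of the recursion of Theorem \ref{recursion} that can be applied inside the subgraph $F-a$ of $F\ast H$: the base case is $V(F)=\{a\}$, and each recursive step either multiplies the universal coefficients by a fixed ring element or, when the step touches a neighbor of $a$ and hence replaces $H$ by $H_{cru}^{a}$, uses the equalities (\ref{equalities}) to fold the result back into the form $[H^{\prime}]+\gamma\lbrack H-a]$. You instead compute $[F\ast H]$ in one pass from Definition \ref{bracket}, exploiting the rank-one off-diagonal block $fh^{\top}$ and a $GF(2)$ nullity lemma in the spirit of Theorem \ref{samen}. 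Your lemma, the detection of the three $H$-side cases by the triple $(\nu,\nu_{H}^{(0)},\nu_{H}^{(1)})$, and the shape of the solution are all correct: one can check that taking $\gamma$ to be the weighted sum of $d^{\nu(M_{F}(T_{F}))}$ over the class with $f\in\mathrm{col}(M_{F}(T_{F}))$ and trivial associated quadratic value, $\beta^{\prime}(a)$ the same sum over the class with nontrivial quadratic value, and $\alpha^{\prime}(a)$ equal to $d^{-1}$ times the sum over the class with $f\notin\mathrm{col}(M_{F}(T_{F}))$ (legal because $\nu\geq1$ there) satisfies all three matching equations -- and such an explicit verification is needed, since the $3\times3$ system is singular over $R$ and cannot simply be inverted. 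Two details to nail down in a full write-up: the degenerate case in which $a$ has no surviving neighbor on the $F$-side (so the change of basis sending $f$ to $e_{1}$ is unavailable and the blocks decouple), and the fact that the correct $F$-side trichotomy is that of the pair $(M_{F}(T_{F}),f)$ -- equivalently the comparison of $\nu_{F}^{(0)}$ with $\nu_{F}^{(1)}$, as you also say -- rather than of $(M_{F}^{\prime},b)$, which already differs from it when the $F$-block has one vertex. What your approach buys is closed formulas for $\alpha^{\prime}(a)$, $\beta^{\prime}(a)$ and $\gamma$ and independence from Theorem \ref{recursion}; what the paper's induction buys is brevity and reuse of machinery already established.
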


\begin{proof}
If $V(F)=\{a\}$ then $F\ast H=H-a$, so the theorem is satisfied with
$\alpha^{\prime}(a)=0=\beta^{\prime}(a)$ and $\gamma=1$.

We proceed using induction on the number of steps of Theorem \ref{recursion}
that may be applied within the subgraph $F-a$ of $F\ast H$. If $F$ has a free
loop and $F^{\prime}$ is obtained from $F$ by removing the free loop then part
(b) of Theorem \ref{recursion} tells us that the values of $\alpha^{\prime
}(a)$, $\beta^{\prime}(a)$ and $\gamma$ appropriate for $F$ are obtained from
those appropriate for $F^{\prime}$ by multiplying by $d$. Similarly, if $F$
has an isolated vertex $v$ then part (c) of Theorem \ref{recursion} tells us
that the values of $\alpha^{\prime}(a)$, $\beta^{\prime}(a)$ and $\gamma$
appropriate for $F$ are obtained from those appropriate for $F-v$ by
multiplying by $[\{v\}]$.

If $F$ has an unlooped vertex $v$ marked $c$ then part (d) of Theorem
\ref{recursion} tells us that $[F\ast H]=\alpha(v)[(F\ast H)-v]+\beta
(V)[(F\ast H)_{cru}^{v}-v]$. If $v$ is not a neighbor of $a$ in $F$ then we
conclude that
\[
\lbrack F\ast H]=\alpha(v)[(F-v)\ast H]+\beta(v)[(F_{cru}^{v}-v)\ast H],
\]
and hence the values of $\alpha^{\prime}(a)$, $\beta^{\prime}(a)$ and $\gamma$
appropriate for $F$ are obtained from the values appropriate for $F-v$ and
$F_{cru}^{v}-v$ by multiplying by $\alpha(v)$ and $\beta(v)$ respectively, and
then adding. If $v$ is a neighbor of $a$ in $F$ then we have
\[
\lbrack F\ast H]=\alpha(v)[(F-v)\ast H]+\beta(v)[(F_{cru}^{v}-v)\ast
H_{cru}^{a}].
\]
The inductive hypothesis tells us that we may express the first summand as
$[H^{\prime}]+\gamma_{1}[H-a]$ and the second as $[(H_{cru}^{a})^{\prime
\prime}]+\gamma_{2}[H_{cru}^{a}-a]$, where $(H_{cru}^{a})^{\prime\prime}$
differs from $H_{cru}^{a}$ only in the weights and loop-mark status of $a$.
The equalities (\ref{equalities}) of Section 7 tell us that $[(H_{cru}%
^{a})^{\prime\prime}]+\gamma_{2}[H_{cru}^{a}-a]$ may be incorporated into
$[H^{\prime}]+\gamma_{1}[H-a]$ by adding $\gamma_{2}$ to $\beta^{\prime}(a)$
and by adding each of $\alpha^{\prime\prime}(a),\beta^{\prime\prime}(a)$ to
$\alpha^{\prime}(a)$ or $\beta^{\prime}(a)$ or $\gamma_{1}$, as dictated by
the loop-mark status of $a$ in $H_{cru}^{a}$.

If $F$ has a looped vertex marked $cr$ the same argument applies. If $F$ has
an unlooped vertex marked $cr$ or a looped vertex marked $c$, simply reverse
the roles of $\alpha(v)$ and $\beta(v)$.

If $F$ has no vertex marked $c$ or $cr$ then we would like to apply part (e)
or part (f) of Theorem \ref{recursion} at some vertex $v$ or $w$ of
$V(F)-\{a\}$. If $v$ or $w$ is a neighbor of $a$ in $F$ then a side effect of
the local complementation will be to replace the subgraph $H-a$ of $F\ast H$
with $H_{cru}^{a}-a$, so the inductive hypothesis will give us an equality of
the form $[F\ast H]=[(H_{cru}^{a})^{\prime\prime}]+\gamma\lbrack H_{cru}%
^{a}-a]$ rather than $[F\ast H]=[H^{\prime}]+\gamma\lbrack H-a]$. As above,
$(H_{cru}^{a})^{\prime\prime}$ denotes a graph that differs from $H_{cru}^{a}$
only in the weights and loop-mark status of $a$, and the equalities
(\ref{equalities}) tell us how to transform $[F\ast H]=[(H_{cru}^{a}%
)^{\prime\prime}]+\gamma\lbrack H_{cru}^{a}-a]$ into a formula of the required
form $[F\ast H]=[H^{\prime}]+\gamma\lbrack H-a]$.
\end{proof}

The corresponding theorem of \cite{T3} has the additional hypothesis
\textquotedblleft no neighbor of $a$ in $H$ is marked,\textquotedblright\ but
as noted at the end of Section 7 this hypothesis is no longer necessary when
using marked local complementation. The conclusion was also phrased
differently in \cite{T3} -- the term $\gamma\lbrack H-a]$ was replaced by a
term $[H_{m}^{\prime}]$ involving a re-weighted version of $a$ marked $c$ --
but according to the formula for $G[(v,c)]$ given in (\ref{equalities}), that
phrasing is equivalent to the one here, as the re-weighted version of $a $ had
its $\beta$ weight equal to 0, and its $\alpha$ weight equal to $\gamma$.

\bigskip

\textbf{Acknowledgment} We are sincerely grateful to D. P. Ilyutko, V. O.
Manturov, L. Zulli and an anonymous referee for advice, encouragement and inspiration.

\bigskip

\end{document}